\renewcommand{\le}{\leqslant}
\renewcommand{\ge}{\geqslant}
\definecolor{mno}{rgb}{0.5,0.1,0.5}
\newcommand{\R}{\mathds R}
\newcommand{\Ee}{\mathds E}
\newcommand{\ent}{\textup{Ent}}
\newcommand{\I}{\mathds 1}
\newtheorem{theorem}{Theorem}[section]
\newtheorem{proposition}[theorem]{Proposition}
\newtheorem{corollary}[theorem]{Corollary}
\theoremstyle{definition}
\newtheorem{example}[theorem]{Example}
\newtheorem{remark}[theorem]{Remark}
\begin{document}
\allowdisplaybreaks
\title[Poincar\'{e}-type Inequalities for Singular Stable-Like Dirichlet Forms] {\bfseries
Poincar\'{e}-type Inequalities for Singular Stable-Like
Dirichlet Forms}

\author{Jian Wang}

  \thanks{\emph{J.\ Wang:}
   School of Mathematics and Computer Science, Fujian Normal University, 350007 Fuzhou, P.R. China; Research Institute for Mathematical Sciences, Kyoto University, Kyoto 606-8502,
Japan. \texttt{jianwang@fjnu.edu.cn}}

\date{}

\maketitle

\begin{abstract} This paper is concerned with a class of singular stable-like Dirichlet forms on $\R^d$,
which are generated by $d$ independent copies of a one-dimensional
symmetric $\alpha$-stable process, and whose L\'{e}vy jump kernel
measure is concentrated on the union of the coordinate axes.
Explicit and sharp criteria for Poincar\'{e} inequality, super
Poincar\'{e} inequality and weak Poincar\'{e} inequality of such singular Dirichlet forms are
presented. When the reference measure is a product measure on
$\R^d$, we also consider the entropy inequality for the associated
Dirichlet forms, which is similar to the log-Sobolev inequality for
local Dirichlet forms, and enjoys the tensorisation property.
\medskip

\noindent\textbf{Keywords:} singular stable-like non-local Dirichlet form; (super/weak) Poincar\'{e} inequality; entropy inequality; tensorisation property; Lyapunov type conditions

\medskip

\noindent \textbf{MSC 2010:} 60G51; 60G52; 60J25; 60J75.
\end{abstract}
\allowdisplaybreaks
\section{Introduction and Main Results}\label{section1}
\subsection{Background for Functional Inequalities of Singular Stable-like Dirichlet Forms}\label{section1.1}
Let $\mu_V(dx)=e^{-V(x)}\,dx$ be a probability measure on $\R^d$, where $V\in C^1(\R^d)$. In the past few years functional inequalities for the following local Dirichlet form $(D_B,\mathscr{D}(D_B))$:
\begin{equation*}
\begin{split}
D_B(f,f)=&\,\int |\nabla f(x)|^2\,\mu_V(dx),\\
\mathscr{D}(D_B)=&\,\big\{f\in L^2(\R^d; \mu): D_B(f,f)<\infty\big\}
\end{split}
\end{equation*} have been intensely investigated by several probabilists.
One of the motivations comes from the study of the ergodicity for diffusion
semigroups associated with the second order elliptic operator
$$L_Bf=\Delta f-\nabla V\cdot \nabla f,$$ which is the generator of the Dirichlet form $(D_B,\mathscr{D}(D_B))$,
and also is the generator of the following stochastic differential
equation
$$dX_t=\sqrt{2}dB_t-\nabla V(X_t)\,dt,$$ where $(B_t)_{t\ge0}$ is a standard Brownian motion on $\R^d$.
Note that the coordinate processes of $(B_t)_{t\ge0}$ are $d$
independent copies of a one-dimensional Brownian motion.
One dimensional Brownian motion is a member of the class of
one dimensional strong Markov processes, called symmetric $\alpha$-stable
processes on $\R$. A one dimensional symmetric $\alpha$-stable process with
index $\alpha\in(0,2]$ is the L\'{e}vy process $(Y_t)_{t\ge0}$ so
that
$$\Ee\Big[e^{i\xi(Y_t-Y_0)}\Big]=e^{-t|\xi|^\alpha}\quad \textrm{ for }t>0\textrm{ and }\xi\in\R.$$
When $\alpha=2$, $(Y_t)_{t\ge0}$ is just a Brownian motion but running at twice
the speed. However
for $\alpha\in (0,2)$, $(Y_t)_{t\ge0}$ is a purely discontinuous
L\'{e}vy process with no drift, no Gaussian part, and L\'{e}vy
measure
$$n(dh)=c_\alpha/|h|^{1+\alpha}\,dh,$$ where $c_\alpha=\frac{\alpha 2^{\alpha-1}\Gamma((1+\alpha)/2)}{\pi^{1/2}\Gamma(1-\alpha/2)}.$
Recently there has been intense interest on the study of processes
with jumps. So it is natural to ask functional inequalities as above when
Brownian motion $(B_t)_{t\ge0}$ is replaced by $d$ independent
copies of a one-dimensional symmetric $\alpha$-stable process. This
is the topic of this paper.

We now give a more precise motivation of this paper. For any $t>0$,
let $Z_t=(Z_t^1, \cdots, Z_t^d)$ be a vector of $d$ independent
one-dimensional symmetric $\alpha$-stable processes with index
$\alpha\in(0,2)$. Consider the following stochastic differential
equation (SDE)
\begin{equation}\label{stable1}dX_t=dZ_t+b(X_{t})\,dt\end{equation} with
some regularity drift term $b$, such that the equation
\eqref{stable1} has a unique weak (or strong) solution
$(X_t)_{t\ge0}$ and a unique invariant probability measure $\mu$. (Surely there is a close relation between the drift term $b$ and the invariant measure $\mu$, and we will discuss it in another paper).
The existence of unique weak and strong solution to the SDE
\eqref{stable1} have been studied in \cite{Bass1} and \cite{Pr},
respectively. Now, let $(P_t)_{t\ge0}$ be the semigroup of the
process $(X_t)_{t\ge0}$ on $L^2(\R^d;\mu)$. Then, for any $f\in C_c^\infty(\R^d)$ and
$x\in \R^d$,
$$\lim_{t\to\infty} \frac{1}{t}\int_0^t P_sf (x)\,ds=\mu(f).$$ We
are interested in measuring the way of $P_t$ converging to its
equilibrium distribution $\mu$.

For simplicity, below we take the $L^2(\R^d; \mu)$-norm for example,
i.e. to consider the bound for $\|P_t(f)-\mu(f)\|_{L^2(\R^d;\mu)}$ for $f\in L^2(\R^d;\mu)$. Note
that the process $(Z_t)_{t\ge0}$ is a $d$-dimensional L\'{e}vy
process, and it picks a coordinate at random from $\{1,\ldots,d\}$
and then jumps a positive or negative distance in that direction.
Therefore, the L\'{e}vy measure for $(Z_t)_{t\ge0}$ is more singular
than that of the spherically symmetric $\alpha$-stable process (see $\nu_S(dz)$ below), and it is
concentrated on the union of the coordinate axes, a one-dimensional
subset of $\R^d$; that is, the density with respect to the Lebesgue measure of the L\'{e}vy measure for
the process $(Z_t)_{t\ge0}$ is given by
$$c_\alpha\left(\delta_{\{x_2=0, \ldots, x_d=0\}}\frac{1}{|x_1|^{1+\alpha}} +\ldots+\delta_{\{x_1=0, \ldots,
 x_{d-1}=0\}}\frac{1}{|x_d|^{1+\alpha}} \right),$$ where $\delta_A$ is Dirac measure of the set $A\subset\R^d$.
Hence, the generator of the process $(X_t)_{t\ge0}$ enjoys the
following expression
$$Lf(x)=\langle b(x), \nabla
f(x)\rangle+\sum_{i=1}^d\int_{\R}\Big(f(x+ze_i)-f(x)-z\I_{\{|z|\le
1\}}\nabla f(x)\cdot e_i
\Big)\frac{c_\alpha\,dz}{|z|^{1+\alpha}},$$where $e_i=(\overbrace{0,\ldots,0}^{i-1}, 1, \overbrace{0,\ldots,0}^{d-i})$ for $1\le
i\le d$. Furthermore, by some formal calculation, for every $f \in
C_c^{\infty}(\R^d)$ such that $\mu(f)=0$,
\begin{align*}
&\frac{d}{d t}\mu((P_t f)^2)\\
&=2\int (L P_t f
)P_t f \,d
\mu\\
&=-c_\alpha\sum_{i=1}^d\int_{\R^d\times \R}
\frac{\big(P_t f(x+ze_i)- P_t f(x)\big)^2}{|z|^{1+\alpha}}
\,d z\,\mu(d x)+\int L ((P_t f)^2)\,d\mu\\
&=-c_\alpha\sum_{i=1}^d\int_{\R^d\times \R}
\frac{\big(P_t f(x+ze_i)- P_t f(x)\big)^2}{|z|^{1+\alpha}}\,d
z\,\mu(d x), \end{align*} where in the second equality we have used
the fact that
$$L(f^2)=2f
Lf+{c_\alpha}\sum_{i=1}^d\int_{\R}\frac{\big(f(x+ze_i)-f(x)\big)^2}{|z|^{1+\alpha}}\,d
z,$$ and the third equality is due to that $\mu$ is an invariant
probability measure and $$\int L ((P_t  f)^2)\,d\mu=0.$$ Therefore, if
one can prove  \begin{equation}\label{stable-2}\mu(f^2)\le
\frac{Cc_\alpha}{2}\sum_{i=1}^d\int_{\R^d\times \R}
\frac{\big( f(x+ze_i)- f(x)\big)^2}{|z|^{1+\alpha}}\,d z\,\mu(d
x),\quad \mu(f)=0\end{equation} holds for some constant $C>0$, then for every $f \in
C_c^{\infty}(\R^d)$ such that $\mu(f)=0$,
$$\frac{d}{d t}\mu((P_t f)^2 )\le -\frac{2}{C}\mu((P_tf)^2).$$ This implies that for every $f \in
C_c^{\infty}(\R^d)$ such that $\mu(f)=0$, $$\mu((P_t f)^2)\le e^{-{2t}/{C}}\mu(f^2),\quad t>0.$$ In particular, we have
$$\|P_t f-\mu(f)\|_{L^2(\R^d;\mu)}\le e^{-t/C}\|f-\mu(f)\|_{L^2(\R^d;\mu)},\quad t>0, f\in L^2(\R^d; \mu),$$which is our
desired assertion. In particular, \eqref{stable-2}
motivates us to study the following bilinear form
\begin{equation*}
\begin{split}
D(f,f)=&\,\frac{1}{2}\sum_{i=1}^d\int_{\R^d\times \R}
\frac{(f(x+ze_i)-f(x))^2}{|z|^{1+\alpha}}\,dz\,\mu(dx).
\end{split}
\end{equation*} Let $\mathscr{D}(D)$ be the closure of $C_c^\infty(\R^d)$
under the $D_1$-norm
$$\|f\|_{D_1}:=({\|f\|_{L^2(\R^d;\mu)}^2+D(f,f)})^{1/2}.$$ Then,
according to \cite[Example 1.2.4]{FOT}, $(D,\mathscr{D}(D))$ is regular symmetric Dirichlet form on $L^2(\R^d;\mu)$. In
this setting, \eqref{stable-2} is the Poincar\'{e} inequality
for the Dirichlet form $(D,\mathscr{D}(D))$, which will be stated explicitly in Theorem \ref{thm1} below.

The main goal of this paper is to prove various Poincar\'{e} type
inequalities for the Dirichlet form $(D,\mathscr{D}(D))$. Recently,
explicit criteria have been presented in \cite{CW14, CWW14, WW} for functional
inequalities of the following (standard) stable-like Dirichlet form
\begin{equation*}
\begin{split}
D_S(f,f)=&\,\frac{1}{2}\int_{\R^d\times \R^d}
\frac{(f(x+z)-f(x))^2}{|z|^{d+\alpha}}\,dz\,\mu(dx), \quad f\in
\mathscr{D}(D_S),
\end{split}
\end{equation*} where $\mathscr{D}(D_S)$ is the closure of $C_c^\infty(\R^d)$
under the $D_{S,1}$-norm
$$\|f\|_{D_{S,1}}:=({\|f\|_{L^2(\R^d;\mu)}^2+D_S(f,f)})^{1/2}.$$
Different from the singular stable-like Dirichlet form
$(D,\mathscr{D}(D))$ considered in the present paper, the L\'{e}vy
jump kernel of $(D_S, \mathscr{D}(D_S))$ is associated with
spherically symmetric $\alpha$-stable processes in \cite{WW}, and it is given
by
$$\nu_S(dz)=\frac{C_{d,\alpha}}{|z|^{d+\alpha}}\,dz,$$ where $C_{d,\alpha}=\frac{\alpha 2^{\alpha-1} \Gamma((d+\alpha)/2)}{\pi^{d/2}\Gamma(1-\alpha/2)}$.

Comparing with the method for the proofs of Poincar\'{e} type
inequalities for $(D_{S},\mathscr{D}(D_S))$ in \cite{WW}, in order to
get the corresponding functional inequalities for
$(D,\mathscr{D}(D))$, we will face with two fundamental differences:
\begin{itemize}
\item [(1)] The efficient approach to yield functional inequalities for $(D_{S},\mathscr{D}(D_S))$ is
to check the Lyapunov type condition for the associated generator,
which heavily depends on the corresponding L\'{e}vy jump kernel
$\nu_S(dz)$. In particular, the Lyapunov function $\phi$ we choose
in \cite{WW} is of the form $\phi(x)=|x|^\beta$ with some constant
$\beta\in(0,1\wedge\alpha)$ for $|x|$ large enough.  Such test function $\phi$ is useful for the generator of
$(D,\mathscr{D}(D))$, but the argument of \cite[Proposition 2.3]{WW} does not work in the present setting.

\item[(2)] Another ingredient to obtain Poincar\'{e} inequality
and super Poincar\'{e} inequality for $(D_{S},\mathscr{D}(D_S))$ is
to prove the local Poincar\'{e} inequality and the local super
Poincar\'{e} inequality. The local super Poincar\'{e} inequality for
$(D_{S},\mathscr{D}(D_S))$ is derived from the classical Sobolev
inequality for fractional Laplacians; while the local Poincar\'{e}
inequality for $(D_{S},\mathscr{D}(D_S))$ is easily obtained by
applying the Cauchy-Schwarz inequality to local variance. However we are unable to use
these approaches here, since the L\'{e}vy jump kernel for the
Dirichlet form $(D,\mathscr{D}(D))$ is much more singular.
\end{itemize}

Due to the above differences and difficulties, obtaining the
criteria for Poincar\'{e} inequality and super Poincar\'{e}
inequality for $(D,\mathscr{D}(D))$ requires new approaches and
ideas, which include the following two points:
\begin{itemize}
\item [(3)] The new choice of  Lyapunov function $\phi$ (a little different from that in \cite{WW}) for the generator associated with $(D,\mathscr{D}(D))$.
 Some more refined calculations are required, due to the character of L\'{e}vy jump kernel for the
Dirichlet form $(D,\mathscr{D}(D))$. (See Proposition \ref{pro2} and its proof.)

\item[(4)] The local super
Poincar\'{e} inequality for $(D,\mathscr{D}(D))$, where
    a new direct proof of the local functional inequality for singular non-local Dirichlet forms is given.
    (See Proposition \ref{pro3}.) An application of the more recent result on the equivalence of defective Poincar\'{e}
    inequality and true Poincar\'{e} inequality for symmetric conservative Dirichlet forms developed in \cite{W13}, yields the
    desired Poincar\'{e} inequality for $(D,\mathscr{D}(D))$, and circumvents the difficulty of
    proving the local Poincar\'{e} inequality for
    $(D,\mathscr{D}(D))$. (See the proof of Theorem \ref{thm1}(1).)
\end{itemize}

\subsection{Main Results: Criteria for Functional Inequalities of Singular Stable-like Dirichlet Forms}
Throughout the paper, we always suppose that
\emph{$\mu(dx)=\mu_V(dx):=e^{-V}\,dx$ is a probability measure on
$\R^d$, such that $e^{-V}$ is a bounded measurable function on
$\R^d.$} We emphasize that unlike \cite{WW} no $C^1$-regularity on $V$ is needed in the present paper. For any $x\in\R^d$, set $$\Gamma^V_{\inf}(x):= \inf_{1\le i\le d:\, |x_i|\ge |x|/\sqrt{d}}\,\,\inf_{|u_i|\le 1} e^{-V(x_1,\cdots, x_{i-1}, u_i, x_{i+1},\cdots,x_d)},$$ $$\Gamma^V_{\sup}(x):= \sup_{1\le i\le d}\,\,\sup_{|u_i|\ge |x_i|} e^{-V(x_1,\cdots, x_{i-1}, u_i, x_{i+1},\cdots,x_d)}$$ and
$$\Lambda(x):=\frac{e^{V(x)}\Gamma^V_{\inf}(x)}{(1+|x|)^{1+\alpha}}.$$
%Note that, by the definition of $\Gamma^V_{\inf}$, $\Lambda(x)$ stands for the ratio between the functions $x_i\mapsto e^{V(x_1,\cdots, x_{i-1}, x_i, x_{i+1},\cdots,x_d)}$ and $x_i\mapsto (1+|x_i|)^{1+\alpha}$ for every $|x_i|$ large enough and each $1\le i\le d$.
Furthermore, we define $$\Phi(r)=\inf_{|x|\ge r} \Lambda(x),\quad r>0.$$

We are now in a position to state the main result in our paper.
\begin{theorem}\label{thm1} Suppose that there exists a constant $\gamma\in (0, \alpha\wedge1)$ such that
\begin{equation}\label{thm1-0}\limsup_{|x|\to\infty}
\frac{|x|^{1+\alpha-\gamma} \Gamma^V_{\sup}(x)}{\Gamma^V_{\inf}(x)}=0.\end{equation} We have the
following statements.
\begin{itemize}
\item[(1)] If $\lim_{r\to\infty}\Phi(r)>0$, then the following Poincar\'{e} inequality
\begin{equation}\label{thm1-1}\mu_V(f^2)\le CD(f,f)+\mu_V(f)^2 ,\quad f\in \mathscr{D}(D)\end{equation} holds with some constant $C>0$.
\item[(2)] If $\lim_{r\to\infty}\Phi(r)=\infty$, then the following super Poincar\'{e} inequality
\begin{equation}\label{thm1-2}\mu_V(f^2)\le sD(f,f)+\beta(s)\mu_V(|f|)^2,\quad s>0, f\in \mathscr{D}(D)\end{equation} holds with $$\beta(s)=C_1(1+s^{-d/\alpha})\frac{\big(\sup_{|x|\le 2\sqrt{d}\Phi^{-1}(C_2(1+1/s))}e^{V(x)}\big)^{2+d/\alpha}}{\big(\inf_{|x|\le \Phi^{-1}(C_2(1+1/s))} e^{V(x)}\big)^{1+d/\alpha}}$$ for some constants $C_1$ and $C_2>0$, where $\Phi^{-1}$ is the generalized inverse of $\Phi$, i.e.\ $\Phi^{-1}(r)=\inf\{s\ge0: \Phi(s)\ge r\}.$
\end{itemize}\end{theorem}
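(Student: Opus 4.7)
The plan is to combine a Lyapunov-function argument controlling the tail $\{|x|\ge r\}$ with the local super Poincar\'e inequality on the ball $\{|x|\le r\}$ furnished by Proposition~\ref{pro3}; in the Poincar\'e case this only produces a defective Poincar\'e inequality, which I would then upgrade to a true one by invoking the equivalence result of \cite{W13} for symmetric conservative Dirichlet forms, bypassing the (genuinely difficult) local Poincar\'e inequality for $(D,\mathscr{D}(D))$.

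First I would build a Lyapunov function $\phi\in C^2(\R^d)$ behaving like $(1+|x|)^\gamma$ at infinity, with $\gamma\in(0,\alpha\wedge 1)$ from \eqref{thm1-0}, and compute $L\phi(x)$ axis by axis, splitting each one-dimensional integral into small ($|z|\le 1$) and large ($|z|>1$) jumps. On a coordinate $i$ with $|x_i|\ge |x|/\sqrt d$ the short-range fractional-Laplacian piece along that axis produces a negative term of order $\Gamma^V_{\inf}(x)\,e^{V(x)}(1+|x|)^{-1-\alpha}\phi(x)=\Lambda(x)\phi(x)$, while the large-jump contributions from all axes are bounded in absolute value by a constant times $\Gamma^V_{\sup}(x)(1+|x|)^{-\alpha+\gamma}$; hypothesis~\eqref{thm1-0} is precisely the statement that the ratio of this upper bound to $\Lambda(x)\phi(x)$ tends to zero, so that for some $c,R_0>0$
\[
-\frac{L\phi(x)}{\phi(x)}\ge c\,\Lambda(x),\qquad |x|\ge R_0,
\]
which should be the content of Proposition~\ref{pro2}. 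The standard symmetric-Dirichlet-form Lyapunov estimate then yields, for every $r\ge R_0$,
\[
c\,\Phi(r)\,\mu_V(f^2\,\I_{\{|x|\ge r\}})\le D(f,f)+c_{R_0}\,\mu_V(f^2\,\I_{\{|x|\le R_0\}}).
\]
Combining this with Proposition~\ref{pro3} applied on $\{|x|\le r\}$ (whose explicit rate involves $\sup_{|x|\le 2\sqrt d\,r}e^{V(x)}$ and $\inf_{|x|\le r}e^{V(x)}$) finishes the argument. For part~(2), since $\Phi(r)\to\infty$, I would set $r=\Phi^{-1}(C_2(1+1/s))$ so that the tail prefactor absorbs the tail mass at level $1/s$ and the claimed rate $\beta(s)$ emerges after elementary bookkeeping. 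For part~(1), $\Phi$ is eventually bounded below by a positive constant, so one directly obtains a defective Poincar\'e inequality $\mu_V(f^2)\le C\,D(f,f)+C'\,\mu_V(|f|)^2$, and the main result of \cite{W13} upgrades it to \eqref{thm1-1}.

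The main obstacle is the Lyapunov estimate itself. Because the L\'evy measure is supported on the union of coordinate axes, there is no isotropy to exploit; the contributions to $L\phi$ from different axes must be estimated separately, and the sign and magnitude of each depend sensitively on whether $|x_i|$ is comparable with $|x|$ or much smaller. The choice of $\phi$ and the isolation of the dominant negative term (produced only by an axis where $|x_i|\ge|x|/\sqrt d$) from the positive outgoing-jump terms (where the integration variable runs over $|u_i|\ge|x_i|$) are what force the asymmetric weights $\Gamma^V_{\inf}$ and $\Gamma^V_{\sup}$ in the definition of $\Lambda$, and what make \eqref{thm1-0} essentially tight: any weaker control of $\Gamma^V_{\sup}/\Gamma^V_{\inf}$ would leave an uncontrolled positive remainder in $L\phi$ that could not be absorbed into $\Lambda\phi$.
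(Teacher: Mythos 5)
Your high-level architecture matches the paper's: control the tail $\{|x|\ge r\}$ with a Lyapunov drift estimate, control the ball with the local super Poincar\'e inequality of Proposition~\ref{pro3}, and (for part~(1)) upgrade the resulting defective Poincar\'e inequality to a true one via \cite{W13}. However, the key technical step --- the Lyapunov estimate that you defer to ``should be the content of Proposition~\ref{pro2}'' --- is misconceived as you describe it, and the mechanism you propose would not produce the required bound.

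You claim that the ``short-range fractional-Laplacian piece'' (jumps $|z|\le 1$) along the dominant coordinate $k$ with $|x_k|\ge|x|/\sqrt d$ produces the negative term of order $\Lambda(x)\phi(x)=e^{V(x)}\Gamma^V_{\inf}(x)(1+|x|)^{-1-\alpha}\phi(x)$. This cannot happen. The factor $\Gamma^V_{\inf}(x)$ is, by definition, $\inf_{|u_k|\le1}e^{-V(x_1,\dots,u_k,\dots,x_d)}$, i.e.\ the value of the density at a point whose $k$-th coordinate lies in $[-1,1]$. When $|x_k|\ge|x|/\sqrt d$ is large, such a point is at distance of order $|x|$ from $x$, so it is only reached by a \emph{large} jump; a small jump $|z|\le1$ can never move $x_k$ from order $|x|$ to a neighbourhood of the origin, so the small-jump integral carries a weight $e^{-V(x+ze_k)}\approx e^{-V(x)}$ and produces a term of order $|x|^{\gamma-\alpha}$ (for sufficiently regular $V$), far smaller than $\Lambda(x)\phi(x)$ in general. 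Correspondingly, your positive large-jump bound $\Gamma^V_{\sup}(x)(1+|x|)^{-\alpha+\gamma}$ is missing the factor $e^{V(x)}$ --- with the right factor restored, the comparison with $\Lambda(x)\phi(x)$ does reproduce \eqref{thm1-0}, which suggests you reverse-engineered the exponents from the hypothesis rather than deriving them.

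What the paper actually does is work with the truncated operator $L_{>1}$ (large jumps only) and $\phi(x)=1+\sum_i|x_i|^\gamma$. The negative drift comes from the large-jump regime $\{|z|>1,\ |x_k+z|\le1\}$: the integrand there is $|x_k+z|^\gamma-|x_k|^\gamma\le 1-|x_k|^\gamma\lesssim -|x_k|^\gamma\sim-\phi(x)$, while $1/|z|^{1+\alpha}\sim|x_k|^{-1-\alpha}\sim(1+|x|)^{-1-\alpha}$ and the weight $e^{-V(x+ze_k)}$ at the landing point is $\ge\Gamma^V_{\inf}(x)$; multiplied by $e^{V(x)}$ this is exactly $-\Lambda(x)\phi(x)$. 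The positive contribution from $\{|z|>1,\ |x_i+z|\ge|x_i|\}$ is bounded by $e^{V(x)}\Gamma^V_{\sup}(x)\int_{|z|>1}|z|^{\gamma-1-\alpha}\,dz$, and \eqref{thm1-0} is precisely the condition that this is negligible relative to $\Lambda(x)\phi(x)$. Working with $L_{>1}$ rather than $L$ also avoids any need for $C^2$ regularity of $\phi$ (the paper's $\phi$ is only H\"older) and any discussion of principal-value cancellations. You would need to rebuild your Lyapunov computation along these lines; as written, the drift estimate --- the core of the proof --- is not justified.
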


The following corollary shows that Theorem \ref{thm1} is sharp in some situation.

\begin{corollary}\label{exm1} Let $$e^{-V(x)}=C_{\varepsilon_1, \ldots, \varepsilon_d}\prod_{i=1}^d(1+|x_i|)^{-(1+\varepsilon_i)}$$ with $\varepsilon_i>0$ for all $1\le i\le d$.
\begin{itemize}
\item[(1)] The Poincar\'{e} inequality \eqref{thm1-1} holds with some
constant $C>0$ if and only if $\varepsilon_i\ge \alpha$ for all $1\le i\le d$.
\item[(2)] The super Poincar\'{e} inequality \eqref{thm1-2} holds with
some function $\beta:(0,\infty)\to(0,\infty)$ if and only if
$\varepsilon_i>\alpha$ for all $1\le i\le d$, and in this case there exists a constant $c>0$
such that the super Poincar\'{e} inequality \eqref{thm1-2} holds with
$$\beta(r)\le c \left(1+r^{-\big(\frac{d}{\alpha}+\frac{(2\alpha+d)\sum_{i=1}^d(1+\varepsilon_i)}{\alpha(\varepsilon_*-\alpha)}\big)}\right),\quad
r>0,$$ where $\varepsilon_*=\min_{1\le i\le d} \varepsilon_i$; and
equivalently,
\begin{equation*}
\begin{split}\|P_t\|_{L^1(\R^d;\mu_V)\to L^\infty(\R^d;\mu_V)}:&=\sup_{f\in L^1(\R^d;\mu_V)}\|P_tf\|_{L^\infty(\R^d;\mu_V)} \\
&\le
\lambda\left(1+t^{-\big(\frac{d}{\alpha}+\frac{(2\alpha+d)\sum_{i=1}^d(1+\varepsilon_i)}{\alpha(\varepsilon_*-\alpha)}\big)}\right),\quad
t>0\end{split}
\end{equation*} holds with some constant $\lambda>0$.
\end{itemize}

\end{corollary}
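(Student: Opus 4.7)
The plan is to make $\Phi$ explicit for the product density and apply Theorem \ref{thm1} for the sufficient directions, use the product structure of $\mu_V$ to tensorise the two sides of \eqref{thm1-1} and \eqref{thm1-2} for the converses, and invoke the standard equivalence with ultracontractivity for the $L^1\to L^\infty$ bound.

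First I would compute the quantities entering Theorem \ref{thm1}. Using the identity
$$\prod_{j\ne i}(1+|x_j|)^{-(1+\varepsilon_j)} = C_{\varepsilon_1,\ldots,\varepsilon_d}^{-1}\,e^{-V(x)}(1+|x_i|)^{1+\varepsilon_i},$$
the inner infimum in $\Gamma^V_{\inf}$ is attained at $|u_i|=1$ and the inner supremum in $\Gamma^V_{\sup}$ at $|u_i|=|x_i|$, giving
$$\Lambda(x)=\inf_{i\in S(x)}\frac{2^{-(1+\varepsilon_i)}(1+|x_i|)^{1+\varepsilon_i}}{(1+|x|)^{1+\alpha}},\qquad \frac{\Gamma^V_{\sup}(x)}{\Gamma^V_{\inf}(x)}\le\frac{c}{(1+|x|)^{1+\varepsilon_*}},$$
with $S(x)=\{i:|x_i|\ge|x|/\sqrt d\}$ and $\varepsilon_*=\min_{1\le i\le d}\varepsilon_i$. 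For $i\in S(x)$ and $|x|\ge\sqrt d$ one has $(1+|x_i|)^{1+\varepsilon_i}\ge c|x|^{1+\varepsilon_*}$, while the test point $x=re_{j^*}$ with $\varepsilon_{j^*}=\varepsilon_*$ shows the bound is sharp; hence $\Phi(r)\asymp r^{\varepsilon_*-\alpha}$ as $r\to\infty$. Condition \eqref{thm1-0} reduces to $|x|^{\alpha-\gamma-\varepsilon_*}\to 0$, which is satisfied by some $\gamma\in(0,\alpha\wedge 1)$ as soon as $\varepsilon_*\ge\alpha$.

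Theorem \ref{thm1}(1) therefore yields \eqref{thm1-1} when $\varepsilon_*\ge\alpha$. For \eqref{thm1-2} under $\varepsilon_*>\alpha$ one has $\Phi^{-1}(s)\asymp s^{1/(\varepsilon_*-\alpha)}$; substituting this together with $\sup_{|x|\le R}e^{V(x)}\le C_{\varepsilon_1,\ldots,\varepsilon_d}^{-1}(1+R)^{\sum_{i=1}^d(1+\varepsilon_i)}$ and $\inf_{|x|\le R}e^{V(x)}=e^{V(0)}$ into the formula for $\beta$ in Theorem \ref{thm1}(2), and collecting powers of $s$, yields the announced exponent $d/\alpha+(2\alpha+d)\sum_{i=1}^d(1+\varepsilon_i)/(\alpha(\varepsilon_*-\alpha))$ after elementary simplification.

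For the converses I would exploit the product structure $\mu_V=\mu_1\otimes\cdots\otimes\mu_d$ with $\mu_i(dy)=c_i(1+|y|)^{-(1+\varepsilon_i)}\,dy$: if $f(x)=g(x_i)$ depends only on the $i$-th coordinate, only the $i$-th summand of $D(f,f)$ is nonzero, so \eqref{thm1-1} and \eqref{thm1-2} project onto the corresponding one-dimensional inequalities for the symmetric $\alpha$-stable Dirichlet form against $\mu_i$. Since singular and isotropic stable-like forms coincide in dimension one, the sharp one-dimensional criteria---$\varepsilon_i\ge\alpha$ for Poincar\'e and $\varepsilon_i>\alpha$ for super Poincar\'e---follow from the corresponding sharpness statements for $(D_S,\mathscr{D}(D_S))$ in \cite{WW}. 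The main obstacle will be precisely this converse at the borderline $\varepsilon_i=\alpha$ in (2): there Theorem \ref{thm1} cannot be reused as a black box, and one must exhibit an explicit sequence of test functions (for instance, normalised indicators of $\{|y|\ge R\}$) whose $\mu_i$-variance defeats every polynomial Nash-type decay $\beta$. Finally, the $L^1\to L^\infty$ estimate in (2) is the classical symmetric-Markov equivalence between super Poincar\'e with polynomial rate $\beta(s)\le c(1+s^{-\theta})$ and ultracontractivity $\|P_t\|_{L^1\to L^\infty}\le\lambda(1+t^{-\theta})$, applied with the exponent $\theta$ computed above.
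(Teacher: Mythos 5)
Your proposal follows the same route as the paper: estimate $\Gamma^V_{\inf}$, $\Gamma^V_{\sup}$, $\Lambda$, verify \eqref{thm1-0} for $\varepsilon_*\ge\alpha$, apply Theorem \ref{thm1} for the sufficient directions, deduce the $\beta$-rate from $\Phi^{-1}(s)\asymp s^{1/(\varepsilon_*-\alpha)}$, and handle the necessity by restricting to functions $f(x)=g(x_i)$ of a single coordinate, so the $d$-dimensional inequality collapses to a one-dimensional stable inequality against $\mu_i$ whose sharp criterion is taken from \cite{WW}; the ultracontractivity bound then follows from the standard super-Poincar\'e/$L^1\!\to\!L^\infty$ equivalence. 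Two small points where your write-up falls short of the paper. First, for $d\ge 2$ the function $f(x)=g(x_i)$ with $g\in C_c^\infty(\R)$ is not compactly supported, so you cannot silently substitute it into \eqref{thm1-1} or \eqref{thm1-2}: one must first check $f\in\mathscr{D}(D)$. The paper does this by citing a domain result ($C_b^1(\R^d)\subset\mathscr{D}(D)$ under the present hypotheses, from \cite[Theorem 2.1(1)]{CW13}); your argument needs that step. Second, your concern that the borderline $\varepsilon_i=\alpha$ in part (2) constitutes a ``main obstacle'' requiring a fresh test-function construction is misplaced: the one-dimensional criterion in \cite[Corollary 1.2(2)]{WW} that you already invoke is an if-and-only-if statement and directly excludes super Poincar\'e at $\varepsilon_i=\alpha$, so no additional argument is needed there.
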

We give two remarks on Corollary \ref{exm1}, which point out the difference between Dirichlet form
$(D,\mathscr{D}(D))$ and $(D_S,\mathscr{D}(D_S))$. Since $(D,\mathscr{D}(D))$ is just $(D_S,\mathscr{D}(D_S))$ when $d=1$, we assume that $d\ge 2$ below. Both reference measures $\mu$ in $(D,\mathscr{D}(D))$ and $(D_S,\mathscr{D}(D_S))$ are given by $\mu(dx)=\mu_V(dx)=e^{-V(x)}\,dx.$
\begin{itemize}
\item[(i)] Let $$e^{-V(x)}=C_{\varepsilon_1, \ldots, \varepsilon_d}\prod_{i=1}^d(1+|x_i|)^{-(1+\varepsilon_i)}$$ with $\varepsilon_i\ge \alpha$ for all $1\le i\le d$. We know from Corollary \ref{exm1} above that the Poincar\'{e} inequality holds for $(D,\mathscr{D}(D));$ however, we do not know whether
the Poincar\'{e} inequality holds for the standard Dirichlet form
$(D_S,\mathscr{D}(D_S))$, because the assumptions of \cite[Theorem 1.1]{WW} are not satisfied.
\item[(ii)]  Let
$$e^{-V(x)}=C_{\varepsilon,d}(1+|x|)^{-(d+\varepsilon)}$$ with
$\varepsilon>0$. Then, according to \cite[Corollary 1.2]{WW}, we
know that the Poincar\'{e} inequality holds for the Dirichlet form
$(D_S,\mathscr{D}(D_S))$ if and only if $\varepsilon\ge\alpha$, and
the super Poincar\'{e} inequality holds for $(D_S,\mathscr{D}(D_S))$
if and only if $\varepsilon>\alpha$. On the other hand, we do not know whether
the Poincar\'{e} inequality holds for the Dirichlet form
$(D,\mathscr{D}(D))$ for any $\varepsilon>0$, since the assumptions of Theorem
\ref{thm1} do not hold for all $\varepsilon>0$.
\end{itemize}

The following corollary further indicates that the conclusions of Theorem \ref{thm1} are explicit in some setting.

\begin{corollary}\label{exm2} Let $$e^{-V(x)}=C_{\varepsilon_1, \ldots, \varepsilon_d,\alpha}\prod_{i=1}^d(1+|x_i|)^{-(1+\alpha)}\log^{-\varepsilon_i}(e+|x_i|)$$ with $\varepsilon_i\in\R$ for all $1\le i\le d$.
\begin{itemize}
\item[(1)] The Poincar\'{e} inequality \eqref{thm1-1} holds for some
constant $C>0$ if and only if $\varepsilon_i\ge 0$ for all $1\le i\le d$.
\item[(2)] The super Poincar\'{e} inequality \eqref{thm1-2} holds for
some function $\beta:(0,\infty)\to(0,\infty)$ if and only if
$\varepsilon_i>0$ for all $1\le i\le d$, and in this case there exists a constant $c>0$
such that the super Poincar\'{e} inequality \eqref{thm1-2} holds with
$$\beta(r)\le \exp\Big(c\big(1+r^{-1/\varepsilon_*}\big)\Big),\quad
r>0$$ for some constants $c>0$ and $\varepsilon_*=\min_{1\le i\le d}
\varepsilon_i$, so that when $\varepsilon_*>1$,
$$\|P_t\|_{L^1(\R^d;\mu_V)\to L^\infty(\R^d;\mu_V)}\le
\exp\Big(\lambda(1+t^{-1/(\varepsilon_*-1)})\Big),\quad t>0$$ holds
for some constant $\lambda>0$. The rate function $\beta$ above is
sharp in the sense that \eqref{thm1-2} does not hold if
$$\lim_{r\to0} r^{1/\varepsilon_*}\log \beta(r)=0.$$

In particular, the following log-Sobolev inequality
$$\mu_V(f^2\log f^2)\le CD(f,f),\quad f\in \mathscr{D}(D), \mu_V(f^2)=1$$
holds for some constant $C>0$ if and only if $\varepsilon_i\ge1$ for all $1\le i\le d$.
\end{itemize}
\end{corollary}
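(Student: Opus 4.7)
My plan is to reduce Corollary \ref{exm2} to Theorem \ref{thm1} by an explicit asymptotic computation of $\Gamma^V_{\inf}$, $\Gamma^V_{\sup}$, $\Lambda$ and $\Phi$ for the product potential $V$, and to handle the necessity and sharpness assertions through axial test functions combined with the standard correspondences between super Poincar\'e inequalities, the log-Sobolev inequality, and $L^1$-$L^\infty$ smoothing.

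Writing $e^{-V(x)}=C\prod_{i=1}^d f_i(x_i)$ with $f_i(t)=(1+|t|)^{-(1+\alpha)}\log^{-\varepsilon_i}(e+|t|)$, I would first observe that replacing one coordinate $x_i$ by $u_i$ with $|u_i|\le 1$ changes only the $i$-th factor and only by a bounded multiplicative constant, while for large $|x|$ each $f_i$ is monotone on $[|x_i|,\infty)$. This yields the asymptotics
\[
\Gamma^V_{\inf}(x)\asymp e^{-V(x)}\inf_{i:\,|x_i|\ge|x|/\sqrt{d}}(1+|x_i|)^{1+\alpha}\log^{\varepsilon_i}(e+|x_i|),\qquad \Gamma^V_{\sup}(x)\asymp e^{-V(x)},
\]
hence $\Lambda(x)\asymp\inf_{i:|x_i|\ge|x|/\sqrt{d}}\big((1+|x_i|)/(1+|x|)\big)^{1+\alpha}\log^{\varepsilon_i}(e+|x_i|)$ and $\Gamma^V_{\sup}(x)/\Gamma^V_{\inf}(x)\asymp[\Lambda(x)(1+|x|)^{1+\alpha}]^{-1}$. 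Evaluating along the coordinate axes (where all the infima but one are vacuous) then gives $\Phi(r)\asymp\log^{\varepsilon_*}(1+r)$ with $\varepsilon_*=\min_i\varepsilon_i$, and hypothesis \eqref{thm1-0} reduces to $|x|^\gamma\log^{\varepsilon_*}(1+|x|)\to\infty$, which holds for every $\gamma\in(0,\alpha\wedge 1)$ irrespective of the $\varepsilon_i$. Theorem \ref{thm1} then delivers Poincar\'e whenever $\varepsilon_*\ge 0$ and super Poincar\'e whenever $\varepsilon_*>0$, giving the sufficiency halves of (1) and (2).

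To obtain the explicit rate in (2) I would invert $\Phi$ to get $\Phi^{-1}(r)\asymp\exp(cr^{1/\varepsilon_*})$; on the ball $|x|\le R$ one has $\inf e^{V}\asymp 1$ (attained near the origin) and $\sup e^{V}\asymp R^{d(1+\alpha)}\log^{\sum_i\varepsilon_i}(R)$, so substituting into the formula for $\beta$ in Theorem \ref{thm1}(2) yields a subexponential prefactor times $\exp(Cs^{-1/\varepsilon_*})$, which is the stated bound. The $L^1$-$L^\infty$ estimate follows from the standard super-Poincar\'e-to-ultracontractivity correspondence (a rate $e^{cs^{-\delta}}$, $\delta\in(0,1)$, upgrades to $\|P_t\|_{1\to\infty}\le\exp(C(1+t^{-\delta/(1-\delta)}))$), applied with $\delta=1/\varepsilon_*$, so $\varepsilon_*>1$ becomes the required threshold. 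The log-Sobolev equivalence is the same machinery in disguise: the log-Sobolev inequality corresponds to a super Poincar\'e rate $\exp(c/s)$, which matches our formula precisely when $1/\varepsilon_*\le 1$, i.e.\ $\varepsilon_*\ge 1$.

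The necessity assertions and the sharpness of $\beta$ I would prove by testing against functions depending on a single coordinate, $f_n(x)=\varphi_n(x_{i_0})$, using Fubini and the product form of $\mu_V$ so that $D(f_n,f_n)$ collapses to a one-dimensional integral against $f_{i_0}(t)\,dt$; this reduces the matter to the known one-dimensional stable-like situation treated in \cite{WW}, and optimising the support of $\varphi_n$ against geometrically growing scales produces the matching lower bound $r^{1/\varepsilon_*}\log\beta(r)\gtrsim 1$. The main technical obstacle will be the fine analysis of $\Lambda$ in the transitional regime where some $|x_j|$ are much smaller than $|x|/\sqrt{d}$ and the minimising index in the definition of $\Gamma^V_{\inf}$ switches between coordinates; however, since the axial tests show that this regime saturates $\Phi$, these cases drive the rate rather than obstruct it.
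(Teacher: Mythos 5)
Your proposal is essentially the same as the paper's argument and is correct in outline: explicit asymptotics for $\Gamma^V_{\inf}$, $\Gamma^V_{\sup}$, $\Lambda$ and $\Phi$ (giving $\Lambda(x)\gtrsim\log^{\varepsilon_*}(e+|x|)$ and $\Phi(r)\gtrsim\log^{\varepsilon_*}r$) feeding into Theorem \ref{thm1}; inversion of $\Phi$ to produce $\beta(r)\le\exp(c(1+r^{-1/\varepsilon_*}))$; the standard super-Poincar\'e$\leftrightarrow$ultracontractivity and super-Poincar\'e$\leftrightarrow$log-Sobolev dictionaries with threshold $\varepsilon_*>1$ and $\varepsilon_*\ge1$ respectively; and necessity plus sharpness via axial test functions $f(x)=g(x_{i_0})$, which collapse $D(f,f)$ to the one-dimensional stable-like form on the marginal $\mu_{i_0}$ and let you invoke the one-dimensional results of \cite{WW}. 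That is precisely what the paper does (citing \cite[Corollary 1.3]{WW} and \cite[Theorem 3.3.15(1), Corollary 3.3.4(1)]{WBook} for the one-dimensional lower bounds and the functional-inequality translations).

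One point you gloss over that the paper makes explicit: the axial test function $f(x)=g(x_{i_0})$ with $g\in C_c^\infty(\R)$ is \emph{not} compactly supported on $\R^d$, so it is not a priori in the form core $C_c^\infty(\R^d)$. You need to know $C_b^1(\R^d)\subset\mathscr{D}(D)$ (which the paper gets from \cite[Theorem 2.1(1)]{CW13}) before you may plug such $f$ into \eqref{thm1-1} or \eqref{thm1-2}. Without that membership the reduction to the one-dimensional situation is incomplete. Apart from that, your route matches the paper's.
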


Both Corollaries \ref{exm1} and \ref{exm2} are concerned with Poincar\'{e}-type inequalities for product measures. As mentioned in the remark after the proof of Theorem \ref{thm10} in Section \ref{section3}, Poincar\'{e} inequalities for Corollaries \ref{exm1}(1) and \ref{exm2}(1) can be obtained from the results of \cite{WW} in one-dimensional setting and the well-known tensorisation procedure. In the following example, we consider product measure with variable order. We mention that the Poincar\'{e} inequality for such measure can not be deduced directly by tensorisation argument.

 \begin{example}\label{coro} Let $$e^{-V(x)}=C\prod_{i=1}^d(1+|x_i|)^{-(1+a_i(x))},$$ where for all $1\le i\le d$, $a_i$ is a bounded Borel measurable function such that $\inf_{x\in \R^d} a_i(x)>0$, and $C>0$ is the normalizing constant. Define
 $$a_j^*(x|u_i):=a_j((x_1, \ldots, x_{i-1}, u_i, x_{i+1},\ldots x_d)),\quad x\in\R^d, u_i\in \R, 1\le i, j\le d.$$ Suppose that for $|x|$ large enough and for all $1\le j\le d$,
 \begin{equation}\label{coro-22}M_j(x):=\inf_{1\le i\le d}\inf_{|u_i|\ge |x_i|}a_j^*(x|u_i)\ge \sup_{1\le i\le d:\, |x_i|\ge |x|/\sqrt{d} } \sup_{|u_i|\le 1} a_j^*(x|u_i)=:N_j(x).\end{equation} Then, we have the following statements.
 \begin{itemize}
 \item[(1)]  If for $|x|$ large enough, \begin{equation}\label{coro-1}  A(x):=\inf_{1\le i\le d:|x_i|\ge|x|/\sqrt{d}}\sup_{|u_i|\le 1}a_i^*(x|u_i)\ge \alpha,\end{equation} then the Poincar\'{e} inequality \eqref{thm1-1} holds with some constant $C>0$.

\item[(2)]
 If \begin{equation*}\label{coro-3}A^*:=\liminf_{|x|\to\infty}A(x)> \alpha,\end{equation*} then for any $\varepsilon>0$, there is a constant $c=c(\varepsilon)>0$ such that the super Poincar\'{e} inequality \eqref{thm1-2} holds with
\begin{equation}\label{beta}\beta(r)\le c \left(1+r^{-\big(\frac{d}{\alpha}+\frac{(2\alpha+d)\sum_{i=1}^d(1+B_i)}{\alpha(A^{*}-\varepsilon- \alpha)}\big)}\right),\quad
r>0,\end{equation} where $B_i= \sup_{x\in \R^d} a_i(x)$ for all $1\le i\le d.$ If moreover $\inf_{|x|\ge r}A(x)$ is constant for $r$ large enough, then \eqref{beta} is satisfied with $\varepsilon=0,$ i.e.\ $$\beta(r)\le c_0 \left(1+r^{-\big(\frac{d}{\alpha}+\frac{(2\alpha+d)\sum_{i=1}^d(1+B_i)}{\alpha(A^{*}- \alpha)}\big)}\right),\quad
r>0.$$
\end{itemize}
\end{example}

\medskip

The remainder of this paper is arranged as follows. The next section
is devoted to the preliminary analysis on singular stable-like Dirichlet form
$(D,\mathscr{D}(D))$. The Lyapunov type drift condition for the
associated truncated Dirichlet form $(D_{>1},\mathscr{D}(D_{>1}))$
is established, and the local super Poincar\'{e} inequality for
$(D,\mathscr{D}(D))$ is also presented. In Section \ref{section3},
we will prove Theorem \ref{thm1} and Corollaries \ref{exm1}, \ref{exm2} and Example \ref{coro}. In particular, on the one hand, when the reference measure is a product measure on
$\R^d$, the entropy inequality for $(D,\mathscr{D}(D))$ is considered here, which shows that $(D,\mathscr{D}(D))$ enjoys the tensorisation property; on the other hand, the weak
Poincar\'{e} inequality for $(D,\mathscr{D}(D))$ is also included,
which can be regarded as a complement of Theorem \ref{thm1}.

\section{Preliminary  Analysis on Singular Stable-like Dirichlet Forms}
Let $\mu_V(dx)=e^{-V(x)}\,dx$ be a probability measure on $\R^d$ such that $e^{-V}$ is a bounded measurable function. For any $f\in C_b^1(\R^d)$, since
\begin{equation*}
\begin{split}
\sum_{i=1}^d\int_{\R^d\times \R}& \frac{(f(x+ze_i)-f(x))^2}{|z|^{1+\alpha}}\,dz\,\mu_V(dx)\\
&\le 4(\|f\|_\infty\vee\|\nabla f\|_\infty)^2\sum_{i=1}^d\int_{\R^d\times \R} \frac{1\wedge z^2}{|z|^{1+\alpha}}\,dz\,\mu_V(dx)\\
&<\infty,
\end{split}
\end{equation*}
we can well define
$$
D(f,f):=\,\frac{1}{2}\sum_{i=1}^d\int_{\R^d\times \R}
\frac{(f(x+ze_i)-f(x))^2}{|z|^{1+\alpha}}\,dz\,\mu_V(dx).$$

For any $x$, $y\in\R^d$, set
\begin{equation*}
\begin{split} &J(x,y):=\frac{1}{2}\big(e^{V(x)}+e^{V(y)}\big)\\
&\times\left(\delta_{\{x_2-y_2=0, \ldots,
x_d-y_d=0\}}\frac{1}{|x_1-y_1|^{1+\alpha}} +\ldots+\delta_{\{x_1-y_1=0,
\ldots,
 x_{d-1}-y_{d-1}=0\}}\frac{1}{|x_d-y_d|^{1+\alpha}} \right),\end{split}
\end{equation*}where $\delta_A$ is Dirac measure of the set $A$.
Then, $$D(f,f)=\frac{1}{2}\int_{\R^d\times \R^d}
(f(x)-f(y))^2J(x,y)\,\mu_V(dx)\,\mu_V(dy),$$ and $J(x,y)$ is the associated L\'{e}vy jump kernel measure. Furthermore, it is easy
to check that
$$x\mapsto \int\big(1\wedge |x-y|^2)J(x,y)\,\mu_V(dy)\in L^1(\R^d; \mu_V).$$ Let
$\mathscr{D}(D)$ be the closure of $C_c^\infty(\R^d)$
under the $D_1$-norm
$$\|f\|_{D_1}:=\sqrt{{\|f\|_{L^2(\R^d;\mu_V)}^2+D(f,f)}}.$$ Then, we
know from \cite[Example 1.2.4]{FOT} that $(D,\mathscr{D}(D))$ is a
regular symmetric Dirichlet form on $L^2(\R^d;\mu_V)$. Furthermore, denote by $(P_t)_{t\ge0}$ the semigroup on $L^2(\R^d;\mu_V)$ associated with $(D,\mathscr{D}(D))$, which can be extended into $L^\infty(\R^d;\mu_V)$, e.g.\ see \cite[Page 56]{FOT}. Since $\mu_V$ is a symmetric and invariant probability measure of $(P_t)_{t\ge0}$, $1=\mu_V (1)=\mu_V(P_t 1)$ for each $t>0$, which implies that $P_t1(x)=1$ for all $t>0$ and almost all $x\in \R^d$. Then, the Dirichlet form $(D,\mathscr{D}(D))$ is conservative.
\medskip

To deal with functional inequalities for the Dirichlet form
$(D,\mathscr{D}(D))$, we will make full use of the truncation
approach. For this, we define for any $f\in C_b^1(\R^d)$,
\begin{equation*}
\begin{split}
D_{>1}(f,f):=&\,\frac{1}{2}\sum_{i=1}^d\int_{\{\R^d\times \R:
|z|>1\}} \frac{(f(x+ze_i)-f(x))^2}{|z|^{1+\alpha}}\,dz\,\mu_V(dx).
\end{split}
\end{equation*} Let $\mathscr{D}(D_{>1})$ be the closure of $C_c^\infty(\R^d)$
under the norm $$\|f\|_{D_{>1},1}:=\sqrt{{\|f\|_{L^2(\R^d;\mu_V)}^2+D_{>1}(f,f)}}.$$ It
is clear that $D_{>1}(f,f)\le D(f,f)$, and so
$\mathscr{D}(D)\subset\mathscr{D}(D_{>1})$. From this, we also can easily conclude that
$(D_{>1},\mathscr{D}(D_{>1}))$ is a regular symmetric
Dirichlet form on $L^2(\R^d;\mu_V)$.

Denote by $B(\R^d)$ the set of measurable functions on $\R^d$, and by $B_b(\R^d)$ the set of bounded measurable functions on $\R^d$. For any $f\in B_b(\R^d)$, define
$$L_{>1}f(x):=\frac{1}{2}\sum_{i=1}^d\int_{\{|z|> 1\}}\big(f(x+ze_i)-f(x)\big)\frac{e^{V(x)-V(x+ze_i)}+1}{|z|^{1+\alpha}}\,dz.$$
We have
\begin{proposition}\label{pro1} $(1)$ For any $f$, $g\in B_b(\R^d)$,
$$D_{>1}(f,g)=-\int fL_{>1}g\,d\mu_V.$$

$(2)$ For $0<\gamma<\alpha$, let \begin{equation*}
\begin{split}\mathscr{C_\gamma}:=\Big\{&g\in
B(\R^d): \textrm{ there exists a constant } C>0 \textrm{ such that
}\\&|g(x)-g(y)|\le C|x-y|^\gamma \textrm{ for any }x, y\in \R^d
\textrm{ with } |x-y|>1\Big\}.\end{split}
\end{equation*} Then, $B_b(\R^d)\subset\mathscr{C_\gamma}$, and for any $g\in
\mathscr{C_\gamma}$, $L_{>1}g$ exists pointwise as a locally bounded
function. Moreover for any $f\in B_b(\R^d)$ and any $g\in
\mathscr{C_\gamma}$,
$$\sum_{i=1}^d\int_{\{\R^d\times\R: |z|>1\}}\frac{|f(x+ze_i)-f(x)||g(x+ze_i)-g(x)|}{|z|^{1+\alpha}}\,dz\,\mu_V(dx)<\infty$$ and
\begin{equation*}
\begin{split}-\int f(x)&L_{>1}g(x)\,\mu_V(dx)\\
&= \frac{1}{2} \sum_{i=1}^d\int_{\{\R^d\times\R: |z|>1\}}\frac{(f(x+ze_i)-f(x))(g(x+ze_i)-g(x))}{|z|^{1+\alpha}}\,dz\,\mu_V(dx).\end{split}
\end{equation*} \end{proposition}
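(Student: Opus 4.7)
The key idea for both parts is a single symmetrization identity. The kernel defining $L_{>1}$ is built precisely so that the change of variables $x\mapsto x-ze_i$ (which transfers the $V$-weight $e^{-V(x+ze_i)}$ onto the $f$-side as $e^{-V(x)}$) combined with the reflection $z\mapsto -z$ (legal since both $|z|^{-(1+\alpha)}$ and the domain $\{|z|>1\}$ are even in $z$) collapses $\int f\,L_{>1}g\,d\mu_V$ into $-D_{>1}(f,g)$. The proof therefore reduces to justifying Fubini, which is done separately in the two parts by absolute-integrability estimates.

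For part (1), since $f,g$ are bounded, $e^{-V}$ is bounded, $\mu_V(\R^d)=1$, and $\int_{\{|z|>1\}}|z|^{-(1+\alpha)}\,dz=2/\alpha<\infty$, every integral in sight is absolutely convergent. Expanding gives
\[
-\int f\,L_{>1}g\,d\mu_V
=-\frac{1}{2}\sum_{i=1}^d\int\!\!\int_{\{|z|>1\}} f(x)\bigl(g(x+ze_i)-g(x)\bigr)\,\frac{e^{-V(x+ze_i)}+e^{-V(x)}}{|z|^{1+\alpha}}\,dz\,dx.
\]
Applying $x\mapsto x-ze_i$ and then $z\mapsto -z$ inside the $e^{-V(x+ze_i)}$ piece converts it into $f(x+ze_i)(g(x)-g(x+ze_i))e^{-V(x)}$; summing with the $e^{-V(x)}$ piece produces $-(f(x+ze_i)-f(x))(g(x+ze_i)-g(x))e^{-V(x)}$, and the overall minus sign reproduces exactly $D_{>1}(f,g)$.

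For part (2), the inclusion $B_b(\R^d)\subset\mathscr{C_\gamma}$ is immediate since $|g(x)-g(y)|\le 2\|g\|_\infty\le 2\|g\|_\infty|x-y|^\gamma$ when $|x-y|>1$, so one may take $C=2\|g\|_\infty$. For $g\in\mathscr{C_\gamma}$ the integrand defining $L_{>1}g(x)$ is dominated by $C\bigl(e^{V(x)}\|e^{-V}\|_\infty+1\bigr)\,|z|^{\gamma-1-\alpha}$, which is $z$-integrable over $\{|z|>1\}$ precisely because $\gamma<\alpha$; this yields the pointwise bound $|L_{>1}g(x)|\le C'(e^{V(x)}+1)$ and hence local boundedness. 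The same estimate together with $|f(x+ze_i)-f(x)|\le 2\|f\|_\infty$ gives
\[
\bigl|f(x+ze_i)-f(x)\bigr|\,\bigl|g(x+ze_i)-g(x)\bigr|\,|z|^{-(1+\alpha)}\le 2C\|f\|_\infty\,|z|^{\gamma-1-\alpha},
\]
which is integrable over $\{|z|>1\}\times\R^d$ against $dz\,d\mu_V(x)$; with this absolute integrability the symmetrization of part (1) applies verbatim. The main technical point throughout is bookkeeping the double substitution $x\mapsto x-ze_i$, $z\mapsto -z$ so that the $V$-weight ends up paired with the correct $f$-argument; once this is tracked carefully, the identity drops out automatically from the design of the kernel $(e^{V(x)-V(x+ze_i)}+1)/|z|^{1+\alpha}$.
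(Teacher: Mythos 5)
Your proof is correct and takes essentially the same approach as the paper: both hinge on the change of variables $x\mapsto x\pm ze_i$ combined with $z\mapsto -z$, and both part (2) estimates match the paper's ($B_b\subset\mathscr{C}_\gamma$ with $C=2\|g\|_\infty$, the $|z|^{\gamma-1-\alpha}$ domination, and the $\frac{4dC\|f\|_\infty}{\alpha-\gamma}$ bound for the absolute integrability). The only cosmetic difference is in part (1): you split the kernel into its $e^{-V(x)}$ and $e^{-V(x+ze_i)}$ pieces and change variables in just the second, whereas the paper first symmetrizes the whole expression (averaging the original with its change-of-variables image) and then re-applies the substitution to collapse the two kernel halves onto $e^{-V(x)}$; both routes land in one or two lines on the same identity.
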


\begin{proof}(1) We first note that for any $g\in B_b(\R^d)$ and $x\in \R^d$,
\begin{equation*}
\begin{split}|L_{>1}g(x)|&\le \frac{1+\|e^{-V}\|_\infty e^{V(x)}}{2}\sum_{i=1}^d\int_{\{|z|>1\}}\frac{|g(x+ze_i)-g(x)|}{|z|^{1+\alpha}}\,dz\\
&\le \frac{2d}{\alpha}\big(1+\|e^{-V}\|_\infty e^{V(x)}\big)\|g\|_\infty,\end{split}
\end{equation*}
which implies that $L_{>1}g$ is well defined.
On the other hand, for any $f$, $g\in B_b(\R^d)$,
\begin{align*}\Big|\int fL_{>1} g\,d\mu_V\Big|\le &\|f\|_\infty\|g\|_\infty \sum_{i=1}^d
\int_{\{\R^d\times \R: |z|>1\}}\frac{1}{|z|^{1+\alpha}}(e^{-V(x)}+e^{-V(x+ze_i)})\,dz\,dx\\
=& \|f\|_\infty\|g\|_\infty \sum_{i=1}^d
\int_{\{\R^d\times \R: |z|>1\}}\frac{1}{|z|^{1+\alpha}}e^{-V(x)}\,dz\,dx\\
&+ \|f\|_\infty\|g\|_\infty \sum_{i=1}^d
\int_{\{\R^d\times \R: |z|>1\}}\frac{1}{|z|^{1+\alpha}}e^{-V(x+ze_i)}\,dz\,dx\\
=&2\|f\|_\infty\|g\|_\infty \sum_{i=1}^d\int_{\{\R^d\times \R: |z|>1\}}\frac{e^{-V(x)}}{|z|^{1+\alpha}}\,dz\,dx\\
=&\frac{4d\|f\|_\infty\|g\|_\infty}{\alpha},\end{align*}
where the second equality follows from the change of the variable
$x\mapsto x+ze_i$ in the second term of the first
equality. Thus, $\int fL_{>1} g\,d\mu_V$ is also well defined.

Furthermore, for any $f$, $g\in B_b(\R^d)$,
\begin{equation*}
\begin{split}-\int &f(x)L_{>1}g(x)\,\mu_V(dx)\\
&= -\frac{1}{2}\sum_{i=1}^d\int_{\{\R^d\times\R:
|z|>1\}}\frac{f(x)(g(x+ze_i)-g(x))}{|z|^{1+\alpha}}\Big(e^{-V(x)}+e^{-V(x+ze_i)}\Big)\,dz\,d
x.\end{split}
\end{equation*}Changing the variables $z\to -z$ and $x\to x+ze_i$ in the right hand side, we get
\begin{equation*}
\begin{split}& -\int f(x)L_{>1}g(x)\,\mu_V(dx)\\
&= -\frac{1}{2}\sum_{i=1}^d\int_{\{\R^d\times\R:
|z|>1\}}\frac{f(x+ze_i)(g(x)-g(x+ze_i))}{|z|^{1+\alpha}}\Big(e^{-V(x)}+e^{-V(x+ze_i)}\Big)\,dz\,d
x.\end{split}
\end{equation*}
Therefore, for any $f$, $g\in B_b(\R^d)$,
\begin{align*} &-\int f(x)L_{>1}g(x)\,\mu_V(dx)\\
&= \frac{1}{4}\sum_{i=1}^d\int_{\{\R^d\times\R:
|z|>1\}}\frac{(f(x+ze_i)-f(x))(g(x+ze_i)-g(x))}{|z|^{1+\alpha}}\\
&\qquad\qquad\qquad\qquad\qquad\qquad \times\Big(e^{-V(x)}+e^{-V(x+ze_i)}\Big)\,dz\,d
x\\
& =\frac{1}{4}\sum_{i=1}^d\int_{\{\R^d\times\R:
|z|>1\}}\frac{(f(x+ze_i)-f(x))(g(x+ze_i)-g(x))}{|z|^{1+\alpha}}e^{-V(x)}\,dz\,d
x \\
&\quad+ \frac{1}{4}\sum_{i=1}^d\int_{\{\R^d\times\R:
|z|>1\}}\frac{(f(x+ze_i)-f(x))(g(x+ze_i)-g(x))}{|z|^{1+\alpha}}e^{-V(x+ze_i)}\,dz\,d
x\\
&=\frac{1}{2}\sum_{i=1}^d\int_{\{\R^d\times\R:
|z|>1\}}\frac{(f(x+ze_i)-f(x))(g(x+ze_i)-g(x))}{|z|^{1+\alpha}}\,dz\,\mu_V(d
x)\\
&=D_{>1}(f,g),\end{align*}where the third equality follows from the change of variables $z\to -z$ and $x\to
x+ze_i$ again in the second term of the second equality.

(2) By the definition of $\mathscr{C_\gamma}$, it is easy to see that $B_b(\R^d)\subset\mathscr{C_\gamma}$. For any $g\in \mathscr{C}_\gamma$,
\begin{equation*}
\begin{split}|L_{>1}g(x)|\le
&\frac{1}{2}\big(1+\|e^{-V}\|_\infty
e^{V(x)}\big)\sum_{i=1}^d\int_{\{|z|>
1\}}\big|g(x+ze_i)-g(x)\big|\frac{1}{|z|^{1+\alpha}}\,dz\\
\le&\frac{dC}{\alpha-\gamma}\big(1+\|e^{-V}\|_\infty
e^{V(x)}\big),\end{split}
\end{equation*} from which we know that $L_{>1}g(x)$ is
a well-defined and locally bounded function on $\R^d$. On the other
hand,
\begin{equation*}
\begin{split} &\sum_{i=1}^d\int_{\{\R^d\times\R: |z|>1\}}\frac{|f(x+ze_i)-f(x)||g(x+ze_i)-g(x)|}
{|z|^{1+\alpha}}\,dz\,\mu_V(dx)\\
&\le C\sum_{i=1}^d\int_{\{\R^d\times\R: |z|>1\}}
\frac{|f(x+ze_i)-f(x)|}
{|z|^{1+\alpha-\gamma}}\,dz\,\mu_V(dx)\\
&\le \frac{4Cd\|f\|_\infty}{\alpha-\gamma}.\end{split}
\end{equation*} Then the last assertion follows from the argument in part (1).
\end{proof}

\begin{remark}According to Proposition \ref{pro1}(1), the operator $L_{>1}$ is a
(formal) generator of the Dirichlet form
$(D_{>1},\mathscr{D}(D_>1))$. However, to verify that the operator
$L_{>1}$ maps $B_b(\R^d)$ into
$L^2(\R^d;\mu_V)$, we need some additional assumption. For sufficient conditions that $L_{>1}$ maps $C_c^\infty(\R^d)$ into
$L^2(\R^d;\mu_V)$, one can refer to
\cite[Theorem 2.1]{CW13}.
\end{remark}

Let $\gamma\in(0,1\wedge\alpha)$ and define $$\phi(x)=1+\sum_{i=1}^d|x_i|^\gamma,$$ which is the Lyapunov function we mentioned in Remark (3) in the end of Section \ref{section1.1}. Then, for any $x,y\in\R^d$, $$|\phi(x)-\phi(y)|\le \sum_{i=1}^d\big||x_i|^\gamma-|y_i|^\gamma\big|\le \sum_{i=1}^d|x_i-y_i|^\gamma\le d |x-y|^\gamma,$$ and so $\phi\in \mathscr{C}_\gamma$.  It follows from Proposition \ref{pro1}(2) that $L_{>1}\phi(x)$ is a well-defined locally bounded function on $\R^d$. Indeed, we have the following explicit estimate for $L_{>1}\phi(x)$.

\begin{proposition}\label{pro2}
For any $x\in\R^d$, let $\Gamma^V_{\inf}(x)$, $\Gamma^V_{\sup}(x)$ and $\Lambda(x)$ be these defined in Theorem $\ref{thm1}$.
Suppose that \eqref{thm1-0} holds with some $\gamma\in (0,\alpha\wedge1)$, and \begin{equation}\label{pro2-1}
\liminf_{|x|\to\infty}  \frac{e^{V(x)}\Gamma^V_{\inf}(x)}{|x|^{1+\alpha}}>0.
\end{equation}
Then, there exist constants $C_1$, $C_2$ and $r_0>0$ such that for all $x\in\R^d$,
\begin{equation}\label{pro2-3}L_{>1}\phi(x)\le -C_1\Lambda(x) \phi(x)\I_{B(0,r_0)^c}(x)+C_2\I_{B(0,r_0)}(x).\end{equation}
 \end{proposition}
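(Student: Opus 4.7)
Expanding the definition, split
$L_{>1}\phi(x) = A(x) + B(x)$ with
\begin{align*}
A(x) &= \frac{1}{2}\sum_{i=1}^d \int_{\{|z|>1\}} \frac{|x_i+z|^\gamma - |x_i|^\gamma}{|z|^{1+\alpha}}\,dz,\\
B(x) &= \frac{1}{2}\sum_{i=1}^d \int_{\{|z|>1\}} \frac{e^{V(x)-V(x+ze_i)}\big(|x_i+z|^\gamma - |x_i|^\gamma\big)}{|z|^{1+\alpha}}\,dz.
\end{align*}
The subadditivity inequality $\big||x_i+z|^\gamma - |x_i|^\gamma\big| \le |z|^\gamma$, valid for $\gamma\in(0,1)$, immediately gives the uniform bound $|A(x)| \le d/(\alpha-\gamma)$. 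All the work thus concerns $B(x)$.

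\textbf{Decomposing $B(x)$.} Split the $i$-th summand according to the sign of $|x_i+z|^\gamma - |x_i|^\gamma$. On $\{|x_i+z|\ge|x_i|\}$, the perturbed point has $i$-th coordinate of absolute value $\ge |x_i|$, hence $e^{-V(x+ze_i)}\le \Gamma^V_{\sup}(x)$; combined with subadditivity, this bounds the total positive contribution to $B(x)$ by $d\,e^{V(x)}\Gamma^V_{\sup}(x)/(\alpha-\gamma)$. For the negative contribution, pick $i^*=i^*(x)$ with $|x_{i^*}|=\max_j|x_j|$, so $|x_{i^*}|\ge|x|/\sqrt{d}$, and restrict the $i^*$-th integral to
$$R(x):=\big\{z\in\R:|z|>1,\ |x_{i^*}+z|\le 1\big\}.$$
For $|x_{i^*}|\ge 3$, $R(x)$ is an interval of length $2$ on which $|z|\le 2|x_{i^*}|$, the factor $|x_{i^*}+z|^\gamma-|x_{i^*}|^\gamma$ is $\le -\tfrac12|x_{i^*}|^\gamma$, and $|x_{i^*}+z|\le 1$ together with $|x_{i^*}|\ge|x|/\sqrt d$ gives $e^{-V(x+ze_{i^*})}\ge\Gamma^V_{\inf}(x)$. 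Dropping the non-positive remainder on the complement of $R(x)$,
$$\text{(negative contribution to $B$)}\le -c_0\,e^{V(x)}\Gamma^V_{\inf}(x)\,|x_{i^*}|^{\gamma-1-\alpha}\le -c_1\,e^{V(x)}\Gamma^V_{\inf}(x)\,|x|^{\gamma-1-\alpha},$$
using $|x_{i^*}|\le|x|$ and $\gamma-1-\alpha<0$.

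\textbf{Conclusion and main difficulty.} For $|x|$ large, $\phi(x)\le 2d|x|^\gamma$ and $(1+|x|)^{1+\alpha}$ is comparable to $|x|^{1+\alpha}$, so $\Lambda(x)\phi(x)\le c_2\,e^{V(x)}\Gamma^V_{\inf}(x)\,|x|^{\gamma-1-\alpha}$, which matches the dominant negative term up to constants. Rewriting \eqref{thm1-0} as $\Gamma^V_{\sup}(x)=o\big(\Gamma^V_{\inf}(x)|x|^{\gamma-1-\alpha}\big)$ shows the positive part of $B(x)$ is $o(\Lambda(x)\phi(x))$; assumption \eqref{pro2-1} yields $\Lambda(x)\ge c_3>0$ for large $|x|$, so $\Lambda(x)\phi(x)\to\infty$ and the uniform bound on $|A(x)|$ is also absorbed. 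Choosing $r_0$ large enough that these three absorptions hold gives $L_{>1}\phi(x)\le -C_1\Lambda(x)\phi(x)$ on $B(0,r_0)^c$. On $B(0,r_0)$, Proposition~\ref{pro1}(2) together with $\phi\in\mathscr{C}_\gamma$ provides the local boundedness $L_{>1}\phi(x)\le C_2$. The main obstacle is the negative estimate: one must isolate the coordinate $i^*$ with $|x_{i^*}|\ge|x|/\sqrt{d}$ and the narrow ``close-to-origin'' region $R(x)$ to produce exactly the rate $\Gamma^V_{\inf}(x)|x|^{\gamma-1-\alpha}$ that the definitions of $\Lambda$ and $\phi$ are tuned to, so that the competing positive contributions fall in line precisely under \eqref{thm1-0} and \eqref{pro2-1}.
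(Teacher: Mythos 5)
Your proposal is correct and follows essentially the same route as the paper: bound the unweighted part $A(x)$ by $d/(\alpha-\gamma)$ via $\gamma$-subadditivity, split the weighted part by sign, control the positive part by $\Gamma^V_{\sup}$, and extract the dominant negative term from the coordinate $i^*$ with $|x_{i^*}|\ge|x|/\sqrt d$ on the window $\{|x_{i^*}+z|\le 1\}$, which is exactly the calibration built into $\Gamma^V_{\inf}$ and $\Lambda$. One small slip: the inequality $1-|x_{i^*}|^\gamma\le-\tfrac12|x_{i^*}|^\gamma$ requires $|x_{i^*}|\ge 2^{1/\gamma}$, not merely $|x_{i^*}|\ge 3$ (the paper correspondingly works on $|x|>2^{1/\gamma}\sqrt d$); this is harmless since you ultimately take $r_0$ large, but the stated threshold should depend on $\gamma$.
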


\begin{proof} According to Proposition \ref{pro1}(2), we only need to verify \eqref{pro2-3} for $|x|$ large enough.
First, by the fact that for any $a$, $b\in\R$, $|a+b|^\gamma\le
|a|^\gamma+|b|^\gamma,$ we get that for any $x\in\R^d$,
\begin{equation*}
\begin{split} \frac{1}{2}\sum_{i=1}^d\int_{\{|z|>1\}}\big(\phi(x+ze_i)-\phi(x)\big)\frac{1}{|z|^{1+\alpha}}\,dz
&\le \frac{1}{2}\sum_{i=1}^d\int_{\{|z|>1\}}\frac{1}{|z|^{1+\alpha-\gamma}}\,dz\\
&=\frac{d}{\alpha-\gamma}
.\end{split}
\end{equation*}
On the other hand, for $|x|>
2^{1/\gamma}\sqrt{d} $ large enough, there is an integer $1\le
k\le d$ such that $|x_k|\ge |x|/\sqrt{d}$, and so
\begin{align*}  &\frac{1}{2}\sum_{i=1}^d\int_{\{|z|>1\}}\big(\phi(x+ze_i)-\phi(x)\big)\frac{e^{V(x)-V(x+ze_i)}}{|z|^{1+\alpha}}\,dz\\
&= \frac{e^{V(x)}}{2}\sum_{i=1}^d\int_{\{|z|>1\}}\frac{|x_i+z|^\gamma-|x_i|^\gamma}{|z|^{1+\alpha}}e^{-V(x+ze_i)}\,dz\\
&= \frac{e^{V(x)}}{2}\bigg[\sum_{i=1}^d\int_{\{|x_i+z|\le |x_i|, |z|>1\}}\frac{|x_i+z|^\gamma-|x_i|^\gamma}{|z|^{1+\alpha}}e^{-V(x+ze_i)}\,dz\\
&\qquad\qquad+ \sum_{i=1}^d\int_{\{|x_i+z|\ge |x_i|, |z|>1\}}\frac{|x_i+z|^\gamma-|x_i|^\gamma}{|z|^{1+\alpha}}e^{-V(x+ze_i)}\,dz\bigg]\\
&\le \frac{e^{V(x)}}{2}\bigg[\int_{\{|x_k+z|\le |x_k|, |z|>1\}}\frac{|x_k+z|^\gamma-|x_k|^\gamma}{|z|^{1+\alpha}}e^{-V(x+ze_k)}\,dz\\
&\qquad\qquad+ \sum_{i=1}^d\int_{\{|x_i+z|\ge |x_i|, |z|>1\}}\frac{|x_i+z|^\gamma-|x_i|^\gamma}{|z|^{1+\alpha}}e^{-V(x+ze_i)}\,dz\bigg],\end{align*}
where in the inequality above we have dropped the sum with $i\neq
k$, since it is negative. It is easy to see that the right hand side is dominated by
\begin{align*}
&\frac{e^{V(x)}}{2} \bigg[\int_{\{|x_k+z|\le 1, |z|>1\}}\frac{1-|x_k|^\gamma}{|z|^{1+\alpha}}e^{-V(x+ze_k)}\,dz\\
&\qquad \qquad +\int_{\{1<|x_k+z|\le |x_k|, |z|>1\}}\frac{|x_k+z|^\gamma-|x_k|^\gamma}{|z|^{1+\alpha}}e^{-V(x+ze_k)}\,dz\\
&\qquad\qquad+ \sum_{i=1}^d\int_{\{|x_i+z|\ge |x_i|, |z|>1\}}\frac{|x_i+z|^\gamma-|x_i|^\gamma}{|z|^{1+\alpha}}e^{-V(x+ze_i)}\,dz\bigg]\\
&\le -\frac{e^{V(x)}}{4} \left(\Big(\inf_{|u_k|\le 1} e^{-V(x_1,\cdots, x_{k-1}, u_k, x_{k+1},\cdots,x_d)}\Big)\int_{\{|x_k+z|\le 1\}}\frac{dz}{|z|^{1+\alpha}}\right)|x_k|^\gamma \\
&\quad+\frac{de^{V(x)}}{2}\Gamma^V_{\sup}(x)\int_{\{|z|>1\}}\frac{|z|^\gamma}{|z|^{1+\alpha}}\,dz,\end{align*}
where in the inequality above we have used the
fact that for any $x_k$, $z\in\R$ with $|x_k|\ge |x|/\sqrt{d}\ge 2^{1/\gamma}>2$ and $|x_k+z|\le 1,$ it holds $|z|\ge |x_k|-|x_k+z|>1$,
and we also have dropped the second term since it is negative too.  Furthermore, combining the fact $|x_k|\ge |x|/\sqrt{d}$ with
\eqref{pro2-1} and \eqref{thm1-0}, we find that the right hand side of the inequality above is smaller than
\begin{align*}
-c_1\frac{e^{V(x)}}{(1+|x|)^{1+\alpha}} \Gamma^V_{\inf}(x)\phi(x)+c_2e^{V(x)}\Gamma^V_{\sup}(x)\le -c_3\frac{e^{V(x)}}{(1+|x|)^{1+\alpha}} \Gamma^V_{\inf}(x)\phi(x),\end{align*} where $c_i(i=1,2,3)$ are positive constants.
Therefore, the desired assertion follows from all the estimates above.
 \end{proof}

Next, we turn to the local super Poincar\'{e} inequality for
$(D,\mathscr{D}(D))$. As mentioned in Section \ref{section1}, the
local super Poincar\'{e} inequality for $(D_S,\mathscr{D}(D_S))$ is
derived from the classical fractional Sobolev inequality, see
\cite[Lemma 3.1]{WW}. However, it seems that the  Dirichlet form
$(D,\mathscr{D}(D))$ does not have a connection
with the fractional Laplacian $-(-\Delta)^{\alpha/2}$. Thus, in the following proposition we need
a completely different approach, which is connected with the property of doubling measure in harmonic analysis, see \cite{St}.

\begin{proposition}\label{pro3} There exists a constant $C_3>0$ such that for any $f\in C_c^\infty(\R^d)$ and $r>0$,
\begin{equation}\label{pro3-local} \int_{B(0,r)}f^2(x)\,\mu_V(dx)\le t D(f,f)+\beta_r(t) \mu_V(|f|)^2,\quad t>0,\end{equation} where
$$\beta_r(t)=C_3\left[ t\wedge  \left(r^\alpha \frac{ \sup_{|x|\le 2\sqrt{d}r}e^{V(x)}}{\inf_{|x|\le r} e^{V(x)}}\right)\right]^{-d/\alpha} \frac{\big(\sup_{|x|\le 2\sqrt{d}r}e^{V(x)}\big)^{2+d/\alpha}}{\big(\inf_{|x|\le r} e^{V(x)}\big)^{1+d/\alpha}}.$$ In particular, for any $r_0>0$, there is a constant $C_4$ depending on $r_0$ such that for any $f\in C_c^\infty(\R^d)$ and $r\ge r_0$, the local Poincar\'{e} inequality \eqref{pro3-local} holds with
$$\beta_r(t)=C_4\left(1+t^{-d/\alpha}\right)\frac{\big(\sup_{|x|\le 2\sqrt{d}r}e^{V(x)}\big)^{2+d/\alpha}}{\big(\inf_{|x|\le r} e^{V(x)}\big)^{1+d/\alpha}}$$ for all $t>0$.
\end{proposition}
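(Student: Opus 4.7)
The plan is to deduce the local super Poincar\'e inequality for $(D,\mathscr{D}(D))$ from a global Nash-type (equivalently super Poincar\'e) inequality for the auxiliary Dirichlet form
\[
D^{\mathrm{Leb}}(f,f):=\tfrac{1}{2}\sum_{i=1}^{d}\int_{\R^d\times\R}\frac{(f(x+ze_i)-f(x))^2}{|z|^{1+\alpha}}\,dz\,dx
\]
obtained by replacing $\mu_V$ with Lebesgue measure in $D$. The semigroup $(P^{\mathrm{Leb}}_t)_{t\ge 0}$ generated by $D^{\mathrm{Leb}}$ on $L^2(\R^d,dx)$ is precisely the transition semigroup of the process $(Z_t)_{t\ge 0}$, whose heat kernel factors as the tensor product $p^{(d)}_t(y-x)=\prod_{i=1}^{d}p^{(1)}_t(y_i-x_i)$ of one-dimensional symmetric $\alpha$-stable densities. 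Since $\|p^{(1)}_t\|_\infty\le c\,t^{-1/\alpha}$, this yields the ultracontractive bound $\|P^{\mathrm{Leb}}_t\|_{L^1(dx)\to L^\infty(dx)}\le c\,t^{-d/\alpha}$, equivalent (by the standard Varopoulos/Carlen--Kusuoka--Stroock theory) to the Nash-type super Poincar\'e inequality
\[
\|f\|_{L^2(dx)}^2\le s\,D^{\mathrm{Leb}}(f,f)+c\,s^{-d/\alpha}\|f\|_{L^1(dx)}^2,\qquad s>0.
\]

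To localize, I would introduce a product cutoff $\chi(x)=\prod_{i=1}^{d}\chi_i(x_i)$ with $\chi_i\in C_c^\infty(\R)$, $0\le\chi_i\le 1$, $\chi_i\equiv 1$ on $[-r,r]$, $\mathrm{supp}(\chi_i)\subset[-2r,2r]$, and $|\chi_i'|\le c/r$. Then $\chi\equiv 1$ on $B(0,r)\subset[-r,r]^d$ and $\mathrm{supp}(\chi)\subset[-2r,2r]^d\subset B(0,2\sqrt{d}r)$. Setting $m:=\inf_{|x|\le r}e^{V(x)}$ and $M:=\sup_{|x|\le 2\sqrt{d}r}e^{V(x)}$, the pointwise comparisons $e^{-V}\le 1/m$ on $B(0,r)$ and $e^{-V}\ge 1/M$ on $\mathrm{supp}(\chi)$ give
\[
\int_{B(0,r)}f^2\,d\mu_V\le \frac{1}{m}\int(\chi f)^2\,dx\quad\text{and}\quad \int|\chi f|\,dx\le M\,\mu_V(|f|).
\]
Applying the above inequality to $\chi f$ reduces the problem to bounding $D^{\mathrm{Leb}}(\chi f,\chi f)$ by $D(f,f)$ up to a controllable error. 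The key technical step is
\[
D^{\mathrm{Leb}}(\chi f,\chi f)\le 2\sum_{i=1}^{d}\int\!\!\int\frac{\chi^2(x)(f(x+ze_i)-f(x))^2}{|z|^{1+\alpha}}\,dz\,dx+2\sum_{i=1}^{d}\int f^2(x)\Psi_i(x)\,dx,
\]
from the Leibniz-type inequality $(\chi f(y)-\chi f(x))^2\le 2\chi^2(y)(f(y)-f(x))^2+2f^2(x)(\chi(y)-\chi(x))^2$ and symmetrization, where $\Psi_i(x):=\int_\R(\chi(x+ze_i)-\chi(x))^2|z|^{-1-\alpha}\,dz$. The first sum is bounded by $4M\,D(f,f)$ since $\chi^2\le Me^{-V}$ on $\mathrm{supp}(\chi)$.

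For the second (boundary) sum the product form of $\chi$ gives
\[
\Psi_i(x)=\Big(\prod_{j\ne i}\chi_j^2(x_j)\Big)\int_\R\frac{(\chi_i(x_i+z)-\chi_i(x_i))^2}{|z|^{1+\alpha}}\,dz\le \frac{c}{r^\alpha}\prod_{j\ne i}\chi_j^2(x_j),
\]
together with the $|x_i|^{-1-\alpha}$ tail decay of the $z$-integral for $|x_i|\ge 3r$ (since $\chi_i\equiv 0$ there). Choosing the parameter $s\lesssim r^\alpha m/M$ allows this boundary contribution to be absorbed, and this is precisely the threshold producing the $[t\wedge(r^\alpha M/m)]^{-d/\alpha}$ factor in $\beta_r(t)$. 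Relabeling $t\asymp sM/m$ and combining the estimates gives the stated bound; the ``in particular'' assertion for $r\ge r_0$ then follows because $r^\alpha M/m\ge r_0^\alpha$ makes $[t\wedge(r^\alpha M/m)]^{-d/\alpha}\le C_{r_0}(1+t^{-d/\alpha})$.

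The main obstacle is the cutoff boundary contribution $\sum_i\int f^2\Psi_i\,dx$. Unlike the spherically symmetric case of \cite{WW}, where the classical fractional Sobolev inequality closes localization cleanly, here $\Psi_i$ is naturally supported on the unbounded slabs $\{|x_j|\le 2r,\ j\ne i\}$ because the jump kernel concentrates on coordinate axes, and one cannot simply compare $f^2$ on such a slab to $\mu_V(f^2)$ since $e^{-V}$ may decay arbitrarily fast in the $i$-th direction. The author's hint at a ``doubling measure'' argument in the spirit of \cite{St} suggests one should exploit the product heat kernel bound directly, combining the uniform $r^{-\alpha}$ bound and the $|x_i|^{-1-\alpha}$ tail of $\Psi_i$ with a dyadic/covering argument on $\{|x_i|\ge 3r\}$ to ensure that the boundary term can be absorbed exactly when $s\le c\,r^\alpha m/M$, yielding the truncation $t\wedge(r^\alpha M/m)$ visible in the final rate function.
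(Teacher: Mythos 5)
Your proposal correctly identifies the key obstacle, but then does not overcome it, and the paper's actual proof takes a substantially different route that bypasses that obstacle entirely.

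The cutoff-plus-Nash strategy you outline fails at the boundary term $\sum_i\int f^2\Psi_i\,dx$, exactly as you note in your final paragraph. Because the jump kernel only moves one coordinate at a time, $\Psi_i$ is supported on the unbounded slab $\{|x_j|\le 2r,\ j\ne i\}\times\R$, and on that slab it only decays like $r/|x_i|^{1+\alpha}$. The integral $\int f^2\Psi_i\,dx$ is against Lebesgue measure, and since $e^{-V}$ may decay arbitrarily fast in the $x_i$-direction there is no comparison of $\int f^2\Psi_i\,dx$ with either $D(f,f)$ or $\mu_V(|f|)^2$: writing $f^2=e^{V}\cdot f^2e^{-V}$ inserts a factor $e^{V(x)}$ which can overwhelm the $|x_i|^{-1-\alpha}$ decay. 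Choosing $s\lesssim r^\alpha m/M$ shrinks the coefficient in front of this term, but does not make it absorbable, and the suggested dyadic/covering argument is not carried out and would face the same unboundedness. So the claimed absorption, and hence the truncation $t\wedge(r^\alpha M/m)$ in your rate function, is unjustified.

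The paper avoids cutoffs altogether. It sets $f_s(x)=\frac{1}{|B(0,s)|}\int_{B(x,s)}f(z)\,dz$ for $0<s\le r$, observes that $\int_{B(0,r)}f_s^2\,dx\le\frac{1}{|B(0,s)|}\bigl(\int_{B(0,2r)}|f|\,dz\bigr)^2$, and writes $\int_{B(0,r)}f^2\le 2\int_{B(0,r)}(f-f_s)^2+2\int_{B(0,r)}f_s^2$. The difference $f-f_s$ is expanded by telescoping $f(y)-f(x)$ along the coordinate directions; Cauchy--Schwarz and a change of variables turn $\int_{B(0,r)}(f-f_s)^2\,dx$ into a sum of one-dimensional Dirichlet integrals with the jump size restricted to $|z_i|\le s$, and crucially the $x$-integration stays inside the \emph{bounded} ball $B(0,\sqrt{d}(r+s))\subset B(0,2\sqrt d\,r)$. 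Since every integral is over a bounded set, the transition to $\mu_V$ costs only $\inf_{|x|\le r}e^V$ and $\sup_{|x|\le 2\sqrt d\,r}e^V$, and optimizing over $s\in(0,r]$ gives the stated $\beta_r(t)$ including the $t\wedge(\cdot)$ truncation. In short, where your approach generates an unbounded slab, the paper's telescoping keeps everything local, which is precisely the ``new direct proof'' the introduction advertises. You would need to replace the cutoff machinery with something like the paper's mollification argument to complete the proof.
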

\begin{proof} The second assertion immediately follows from the first one, by the fact that the function $r\mapsto r^\alpha \frac{ \sup_{|x|\le 2\sqrt{d}r}e^{V(x)}}{\inf_{|x|\le r} e^{V(x)}}$ is increasing. Thus, we only need to prove the first one, which is split into three steps.

(1) For any $0<s\le r$ and $f\in C_c^\infty(\R^d)$, define
$$f_s(x):=\frac{1}{|B(0,s)|}\int_{B(x,s)}f(z)\,dz,\quad x\in B(0,r),$$ where $|B(0,s)|$ denotes the volume of the ball with center at 0 and radius $s$. We have
$$\sup_{x\in B(0,r)}|f_s(x)|\le \frac{1}{|B(0,s)|} \int_{B(0,2r)}|f(z)|\,dz,$$
and \begin{align*}\int_{B(0,r)}|f_s(x)|\,dx&\le \int_{B(0,r)}\frac{1}{|B(0,s)|}\int_{B(x,s)}|f(z)|\,dz\,dx\\
&\le \int_{B(0,2r)}\bigg(\frac{1}{|B(0,s)|}\int_{B(z,s)}\,dx\bigg)|f(z)|\,dz\\
&\le \int_{B(0,2r)}|f(z)|\,dz.
\end{align*} Thus,
$$\aligned\int_{B(0,r)}f_s^2(x)\,dx\le & \Big(\sup_{x\in B(0,r)}|f_s(x)|\Big) \int_{B(0,r)}|f_s(x)|\,dx\\
\le &\frac{1}{|B(0,s)|} \bigg(\int_{B(0,2r)}|f(z)|\,dz\bigg)^2.\endaligned$$

Furthermore, by the Cauchy-Schwarz inequality, for any $f\in C_c^\infty(\R^d)$ and $0<s\le r,$
\begin{align*}\int_{B(0,r)}&f^2(x)\,dx\\
\le & 2\int_{B(0,r)}\big(f(x)-f_s(x)\big)^2\,dx+ 2\int_{B(0,r)}f^2_s(x)\,dx\\
\le &2\int_{B(0,r)}\frac{1}{|B(0,s)|}\int_{B(x,s)}(f(x)-f(y))^2\,dy\,dx+ \frac{2}{|B(0,s)|} \bigg(\int_{B(0,2r)}|f(z)|\,dz\bigg)^2.\end{align*}
Using the convention that $(y_0-x_0)e_0=0$ and the inequality that
$$(a_1+\ldots+a_d)^2\le d(a_1^2+\ldots+a_d^2),\quad a_1,\ldots, a_d\in\R$$ deduced from the Cauchy-Schwarz inequality again, we find that the right hand side is dominated by \begin{align*}&\frac{2d}{|B(0,s)|}\sum_{i=1}^d\int_{B(0,r)}\int_{B(x,s)}\Big(f\big(x+(y_1-x_1)e_1+\ldots+(y_i-x_i)e_i\big)\\
&\quad\qquad\qquad\qquad-
f\big(x+(y_1-x_1)e_1+\ldots+(y_{i-1}-x_{i-1})e_{i-1}\big)\Big)^2\,dy\,dx\\
&\quad+ \frac{2}{|B(0,s)|} \bigg(\int_{B(0,2r)}|f(z)|\,dz\bigg)^2\\
&\le\frac{2d}{|B(0,s)|}\sum_{i=1}^d\int_{B(0,r)}\int_{\{|z_1|\le s\}}\ldots\int_{\{|z_d|\le s\}}\Big(f\big(x+z_1e_1+\ldots+z_ie_i\big)\\
&\quad\qquad\qquad\qquad-
f\big(x+z_1e_1+\ldots+z_{i-1}e_{i-1}\big)\Big)^2\,dz_1\ldots dz_d\,dx\\
&\quad+ \frac{2}{|B(0,s)|} \bigg(\int_{B(0,2r)}|f(z)|\,dz\bigg)^2\\
&\le\frac{2^{d}ds^{d-1}}{|B(0,s)|}\sum_{i=1}^d\int_{B(0,\sqrt{d}(r+s))}\int_{\{|z_i|\le s\}}\Big(f\big(x+z_ie_i\big)-f(x)\Big)^2\,dz_i\,dx\\
&\quad+ \frac{2}{|B(0,s)|} \bigg(\int_{B(0,2r)}|f(z)|\,dz\bigg)^2,\end{align*}
where the two inequalities above follow from setting $z_k=y_k-z_k$ for all $1\le k \le i-1$ and enlarging the domain of $x$ respectively.
On the other hand, we can easily see that the right hand side of the inequality above is smaller than
\begin{align*}
 &\frac{2^{d}ds^{d+\alpha}}{|B(0,s)|}\sum_{i=1}^d\int_{B(0,\sqrt{d}(r+s))}\int_{\{|z|\le s\}}\frac{\Big(f\big(x+ze_i\big)-f(x)\Big)^2}{|z|^{1+\alpha}}\,dz\,dx\\
&+ \frac{2}{|B(0,s)|} \bigg(\int_{B(0,2r)}|f(z)|\,dz\bigg)^2.\end{align*} Therefore, for any $f\in C_c^\infty(\R^d)$ and $0<s\le r$,
\begin{equation*}\begin{split} \int_{B(0,r)}f^2(x)\,dx
 &\le\frac{2^{d}ds^{d+\alpha}}{|B(0,s)|}\sum_{i=1}^d\int_{B(0,\sqrt{d}(r+s))}\int_{\{|z|\le s\}}\frac{\Big(f\big(x+ze_i\big)-f(x)\Big)^2}{|z|^{1+\alpha}}\,dz\,dx\\
&\quad + \frac{2}{|B(0,s)|} \bigg(\int_{B(0,2r)}|f(z)|\,dz\bigg)^2.\end{split} \end{equation*}

(2) According to the inequality above, for any $f\in C_c^\infty(\R^d)$ and $0<s\le r,$
{\small \begin{align*}\int_{B(0,r)}&f^2(x)\,\mu_V(dx)\\
\le & \frac{1}{\inf_{|x|\le r} e^{V(x)}}\int_{B(0,r)}f^2(x)\,dx\\
\le & \Bigg[\frac{2^{d}ds^{d+\alpha}}{|B(0,s)|\big(\inf_{|x|\le r} e^{V(x)}\big)}\Bigg] \sum_{i=1}^d\int_{B(0,\sqrt{d}(r+s))}\int_{\{|z|\le s\}}\frac{\Big(f\big(x+ze_i\big)-f(x)\Big)^2}{|z|^{1+\alpha}}\,dz\,dx\\
&+ \frac{2}{|B(0,s)|\big(\inf_{|x|\le r} e^{V(x)}\big)} \bigg(\int_{B(0,2r)}|f(z)|\,dz\bigg)^2\\
\le & \Bigg[\frac{2^{d}ds^{d+\alpha}\big(\sup_{|x|\le 2\sqrt{d}r}e^{V(x)}\big)}{|B(0,s)|\big(\inf_{|x|\le r} e^{V(x)}\big)}\Bigg] \\
&\qquad \qquad \times \sum_{i=1}^d\int_{B(0,2\sqrt{d}r)}\int_{\{|z|\le s\}}\frac{\Big(f\big(x+ze_i\big)-f(x)\Big)^2}{|z|^{1+\alpha}}\,dz\,\mu_V(dx)\\
&+ \frac{2\big(\sup_{|x|\le 2r}e^{V(x)}\big)^2}{|B(0,s)|\big(\inf_{|x|\le r} e^{V(x)}\big)} \bigg(\int_{B(0,2r)}|f(x)|\,\mu_V(dx)\bigg)^2\\
\le &\Bigg[\frac{2^{d+1}ds^{d+\alpha}\big(\sup_{|x|\le 2\sqrt{d}r}e^{V(x)}\big)}{|B(0,s)|\big(\inf_{|x|\le r} e^{V(x)}\big)}\Bigg] D(f,f)+ \frac{2\big(\sup_{|x|\le 2r}e^{V(x)}\big)^2}{|B(0,s)|\big(\inf_{|x|\le r} e^{V(x)}\big)} \mu_V(|f|)^2,\end{align*}}
which implies that the local super Poincar\'{e} inequality \eqref{pro3-local} holds with
{\small $$\beta_r(t)=\inf\left\{\frac{2\big(\sup_{|x|\le 2r}e^{V(x)}\big)^2}{|B(0,s)|\big(\inf_{|x|\le r} e^{V(x)}\big)}: 0<s\le r\textrm{ and } \frac{2^{d+1}ds^{d+\alpha}\big(\sup_{|x|\le 2\sqrt{d}r}e^{V(x)}\big)}{|B(0,s)|\big(\inf_{|x|\le r} e^{V(x)}\big)}\le t   \right\}$$ } for any $t>0$.

(3) Next, we fix $r>0$. If $0<t\le \frac{2^{d+1}d r^{d+\alpha}\big(\sup_{|x|\le 2\sqrt{d}r}e^{V(x)}\big)}{|B(0,r)|\big(\inf_{|x|\le r} e^{V(x)}\big)}$, then one can choose $s\in (0,r]$ such that $\frac{2^{d+1}ds^{d+\alpha}\big(\sup_{|x|\le 2\sqrt{d}r}e^{V(x)}\big)}{|B(0,s)|\big(\inf_{|x|\le r} e^{V(x)}\big)}= t,$ and so there is a constant $C_5>0$ (independent of $r, t$) such that $ \beta_r(t)\le C_5 t^{-d/\alpha}\frac{\big(\sup_{|x|\le 2\sqrt{d}r}e^{V(x)}\big)^{2+d/\alpha}}{\big(\inf_{|x|\le r} e^{V(x)}\big)^{1+d/\alpha}} .$

If $t\ge \frac{2^{d+1}d r^{d+\alpha}\big(\sup_{|x|\le 2\sqrt{d}r}e^{V(x)}\big)}{|B(0,r)|\big(\inf_{|x|\le r} e^{V(x)}\big)}$, then, by taking $s=r$ in the right hand side of the definition of $\beta_r(t)$ above, we find that $\beta_r(t)\le C_6\frac{\big(\sup_{|x|\le 2r}e^{V(x)}\big)^2}{r^d\big(\inf_{|x|\le r} e^{V(x)}\big)}.$

Combining with both estimates above, we complete the proof.
% $$t=\frac{2^{d+1}ds^{d+\alpha}\big(\sup_{|z|\le 2\sqrt{d}r}e^{V(z)}\big)}{|B(0,s)|\big(\inf_{|z|\le r} e^{V(z)}\big)}$$ in the inequality above, we can prove the desired assertion for the case $0<t\le r$.

%(3) When $t>r$, by (2),
%$$ \int_{B(0,r)}f^2(x)\,\mu_V(dx)\le t D(f,f)+\beta_r(r) \mu_V(|f|)^2= t D(f,f)+\beta_r(t\wedge r) \mu_V(|f|)^2.$$ The proof is complete.
 \end{proof}

\section{Proofs and Complements}\label{section3}
\subsection{Proofs of Theorem \ref{thm1}, Corollaries and Example}
We begin with the proof of Theorem \ref{thm1}.

\begin{proof}[{\bf Proof of Theorem \ref{thm1}}]
Since $C_c^\infty(\R^d)$ is the core of $\mathscr{D}(D)$, it is
enough to prove the desired Poincar\'{e} type inequalities for any $f\in
C_c^\infty(\R^d)$. According to Proposition \ref{pro2}
 and $\phi\ge1$, there exists a
constant $r_0>0$ such that
$$\I_{B(0,r)^c}\le \frac{1}{C_1\Phi(r)}\frac{-L_{>1}\phi}{\phi}+\frac{C_2}{C_1\Phi(r)}\I_{B(0,r_0)}
,\quad r\ge r_0,$$where $$\Phi(r)=\inf_{|x|\ge r} \Lambda(x).$$ Then, for any $f\in
C_c^\infty(\R^d)$,
$$\mu_V(f^2\I_{B(0,r)^c})\le
\frac{1}{C_1\Phi(r)}\mu_V\left(f^2\frac{-L_{>1}\phi}{\phi}\right)+\frac{C_2}{C_1\Phi(r)}\mu_V(f^2\I_{B(0,r_0)}),\quad
r\ge r_0.$$ We note that for any $x$, $y\in\R^d$,
\begin{equation*}
\begin{split}
\left(\frac{f^2(x)}{\phi(x)}-\frac{f^2(y)}{\phi(y)}\right)(\phi(x)-\phi(y))
&=f^2(x)+f^2(y)-\left(\frac{\phi(y)}{\phi(x)}f^2(x)+\frac{\phi(x)}{\phi(y)}f^2(y)\right)\\
&\le f^2(x)+f^2(y)-2|f(x)||f(y)|\\
&\le (f(x)-f(y))^2,
\end{split}
\end{equation*} which, along with Proposition \ref{pro1}(2), yields that
$$\mu_V\left(f^2\frac{-L_{>1}\phi}{\phi}\right)\le
D_{>1}(f,f)\le D(f,f).$$ Therefore, for any $f\in C_c^\infty(\R^d)$,
\begin{equation}\label{proof1} \mu_V(f^2\I_{B(0,r)^c})\le
\frac{1}{C_1\Phi(r)}D(f,f)+\frac{C_2}{C_1\Phi(r)}\mu_V(f^2\I_{B(0,r_0)}),\quad
r\ge r_0.\end{equation}

On the other hand, by Proposition \ref{pro3}, there exists a constant $C_3>0$ such that for any $f\in
C_c^\infty(\R^d)$, $r\ge r_0$ and $t>0$,
\begin{equation*}
\begin{split}
\mu_V(f^2\I_{B(0,r)}) \le &t{D}(f,f)+\beta_r(t)\mu_V(|f|)^2,
\end{split}
\end{equation*} where
$$\beta_r(t)=C_3(1+t^{-d/\alpha})\frac{\big(\sup_{|x|\le 2\sqrt{d}r}e^{V(x)}\big)^{2+d/\alpha}}{\big(\inf_{|x|\le r} e^{V(x)}\big)^{1+d/\alpha}}.$$

Combining it with \eqref{proof1}, we get that for any $f\in
C_c^\infty(\R^d)$, $r\ge r_0$ and $t>0$,
\begin{equation}\label{proof2}
\begin{split}\mu_V(f^2)=&\mu_V(f^2\I_{B(0,r)^c})+\mu_V(f^2\I_{B(0,r)})\\
\le &\left(t+\frac{1+C_2 t}{C_1\Phi(r)}\right)D(f,f)+
\beta_r(t)\left(1+\frac{C_2}{C_1\Phi(r)}\right)\mu_V(|f|)^2.\end{split}
\end{equation}

(1) Suppose that $$\lim\limits_{r\to\infty} \Phi(r)>0.$$ Then, for
any fixed $t>0$, \eqref{proof2} is just the defective Poincar\'{e}
inequality. Since the conservative and symmetric Dirichlet form $(D,
\mathscr{D}(D))$ is irreducible, i.e.\ $D(f, f)=0$ implies $f$ is a
constant function, it follows from \cite[Corollary 1.2]{W13} (see
also \cite[Theorem 1]{Mi}) that the defective Poincar\'{e}
inequality \eqref{proof2} implies the true Poincar\'{e}
inequality \eqref{thm1-1}.

(2) Now, we assume that
$$\lim\limits_{r\to\infty} \Phi(r)=\infty.$$ For any $s>0$, taking $t=s/2$ and $r=\Phi^{-1}((2+C_2s)/(C_1s))$ in \eqref{proof2}, we can get the super Poincar\'{e} inequality \eqref{thm1-2} with the desired rate function $\beta$ (possibly with different positive constants $C_1$ and $C_2$).
\end{proof}

\begin{proof}[{\bf Proof of Corollary \ref{exm1}}] Let $\varepsilon_*=\min_{1\le i\le d} \varepsilon_i$. We have
\begin{equation*}
\begin{split}\Gamma_{\inf}^V(x)\ge c_1 e^{-V(x)}(1+|x|)^{1+\varepsilon_*}, \,\, \Gamma_{\sup}^V(x)=e^{-V(x)}\textrm{ and } \Lambda(x)\ge c_2 (1+|x|)^{\varepsilon_*-\alpha}\end{split}
\end{equation*}for some constants $c_1,c_2>0$. Then, \eqref{thm1-0} is satisfied for any $\gamma>0$, if $\varepsilon_*\ge \alpha$. Next, we will prove the desired assertions.

(1) If $\varepsilon_i\ge \alpha$ for all $1\le i\le d$, then the Poincar\'{e} inequality \eqref{thm1-1} follows from
Theorem \ref{thm1}(1). Assume that for
some $1\le i\le d$, $\varepsilon_i\in (0,\alpha)$. To disprove the Poincar\'{e} inequality \eqref{thm1-1} in this case, let us consider the function
$f(x)=g(x_i)$, where $g\in C_c^\infty(\R)$.  By \cite[Theorem 2.1(1)]{CW13}, in the present setting $C_1^b(\R^d)\subset \mathscr{D}(D)$, and so we can apply $f\in C_b^\infty(\R^d)\subset \mathscr{D}(D)$ into the Poincar\'{e} inequality
\eqref{thm1-1}. Furthermore, it is easy to see that for this class of functions, the Poincar\'{e} inequality \eqref{thm1-1} is reduced into
\begin{equation}\label{proof-exm1-1}m(g^2)\le C\int_{\R\times \R} \frac{(g(x+z)-g(x))^2}{|z|^{1+\alpha}}\,dz\,m(dx),\quad m(g)=0, g\in C_c^\infty(\R),\end{equation} where $m(dx)=C_{\varepsilon_i}(1+|x|)^{-(1+\varepsilon_i)}\,dx.$
According to \cite[Corollary 1.2]{WW}, we know that for
$\varepsilon_i\in(0,\alpha)$, the inequality \eqref{proof-exm1-1}
does not hold. Therefore, for any constant $C>0$, the Poincar\'{e}
inequality \eqref{thm1-1} also does not hold.

(2) Let $\varepsilon_i>\alpha$ for all $1\le i\le d$.
It is easy to see that $\Phi(r)\ge c_3 r^{\varepsilon_*-\alpha}$ for $r$ large enough.
Then, $ \Phi^{-1}(r^{-1})\le c_4r^{-1/(\varepsilon_*-\alpha)}$ for $r$ small enough, so that
$$\beta(r)\le c_5\left(1+r^{-(\frac{d}{\alpha}+\frac{(2\alpha+d)\sum_{i=1}^d(1+\varepsilon_i)}
{\alpha(\varepsilon_*-\alpha)})}\right),\quad r>0$$
for some constant $c_5>0$. The equivalence of the super Poincar\'{e} inequality and the corresponding bound of $\|P_t\|_{L^1(\R^d;\mu_V)\to L^\infty(\R^d;\mu_V)}$ then follows from \cite[Theorem 3.3.15(2)]{WBook}.

Next, we prove that if $\varepsilon_i\in(0,\alpha]$ for some $1\le i\le d$, then for any $\beta:(0,\infty)\to(0,\infty)$ the super Poincar\'{e} inequality \eqref{thm1-2} does not hold. Indeed, if the inequality \eqref{thm1-2} holds, then, applying the function
$f(x)=g(x_i)$ (where $g\in C_c^\infty(\R)$) mentioned above, we have
{\small\begin{equation}\label{proof-exm1-2}
\begin{split}m(g^2)\le & r\int_{\R\times \R} \frac{(g(x+z)-g(x))^2}{|z|^{1+\alpha}}\,dz\,m(dx)+\beta(r)m(|g|)^2,\quad r>0, g\in
C_c^\infty(\R).\end{split}
\end{equation} }However, according to \cite[Corollary 1.2(2)]{WW}, \eqref{proof-exm1-2} can not be true.
\end{proof}

\begin{proof}[{\bf Proof of Corollary \ref{exm2}}] Suppose that $\varepsilon_*=\min_{1\le i\le d} \varepsilon_i\ge0$. Then, there are constants $c_i>0$ $(i=1,2)$ such that for all $x\in \R^d$,
\begin{equation*}
\begin{split}\Gamma_{\inf}^V(x)\ge c_1 e^{-V(x)}(1+|x|)^{1+\alpha}\log^{\varepsilon_*}(e+|x|),\,\,
\Gamma_{\sup}^V(x)=e^{-V(x)}\end{split}
\end{equation*} and $\Lambda(x)\ge c_2 \log^{\varepsilon_*}(e+|x|)$. It is obvious that \eqref{thm1-0} holds.

(1) If $\varepsilon_i\ge 0$ for all $1\le i\le d$, then the Poincar\'{e} inequality \eqref{thm1-1} follows from
Theorem \ref{thm1}(1).
To prove that the Poincar\'{e} inequality \eqref{thm1-1} does not hold when
$\varepsilon_i<0$ for some $1\le i\le d$, we only need to consider the Poincar\'{e} inequality \eqref{proof-exm1-1}, where $m(dx)=C_{\varepsilon_i}(1+|x|)^{-(1+\alpha)}\log^{-\varepsilon_i}(e+|x|)\,dx$. Then, according to \cite[Corollary 1.3]{WW}, \eqref{proof-exm1-1} does not hold, and so \eqref{thm1-1} does not too.

(2) Let $\varepsilon_i>0$ for all $1\le i\le d$. It is easy to see that $\Phi(r)\ge c_3\log^{\varepsilon_*}r$ for $r$ large enough. Then, $ \Phi^{-1}(1/r)\le c_4\exp(c_4 r^{-1/\varepsilon_*})$ for $r$ small enough, so that
$$\beta(r)\le \exp\Big( c_5(1+r^{-1/\varepsilon_*})\Big),\quad r>0$$ for some constant $c_5>0$.
When $\varepsilon_i>1$ for all $1\le i\le d$, the equivalence of the super Poincar\'{e} inequality and the corresponding bound of $\|P_t\|_{L^1(\R^d; \mu_V)\to L^\infty(\R^d;\mu_V)}$ follows from \cite[Theorem 3.3.15(1)]{WBook}.

Next, we prove that if $\varepsilon_i\le 0$ for some $1\le i\le d$,
then for any $\beta:(0,\infty)\to(0,\infty)$ the super Poincar\'{e}
inequality \eqref{thm1-2} does not hold. Indeed, in this case, we
only need to prove that the super Poincar\'{e} inequality
\eqref{proof-exm1-2} does not hold for
$m(dx)=C_{\varepsilon_i}(1+|x|)^{-(1+\alpha)}\log^{-\varepsilon_i}(e+|x|)\,dx$.
This is just a consequence of \cite[Corollary 1.3]{WW}.

According to \cite[Corollary 3.3.4(1)]{WBook}, the super
Poincar\'{e} inequality with $\beta(r)=\exp(c(1+r^{-1}))$ for some
$c>0$ is equivalent to the log-Sobolev inequality for some constant
$C>0$, and so according to the conclusions above we can conclude the
last assertion in (2) for log-Sobolev inequality.
\end{proof}

\begin{proof}[{\bf Proof of Example \ref{coro}}] We first estimate $\Gamma_{\inf}^V(x)$ and $\Gamma_{\sup}^V(x)$ respectively. On the one hand, for all $x\in \R^d$, by using the boundness of $a_i$ for all $1\le i\le d$,
 \begin{align*}\Gamma_{\inf}^V(x)\ge &c_1\inf_{1\le i\le d:|x_i|\ge|x|/\sqrt{d}}\inf_{|u_i|\le 1}\bigg[(1+|u_i|)^{-1-a_i^*(x|u_i)} \prod_{j\neq i}(1+|x_j|)^{-1-a_j^*(x|u_i)}\bigg]\\
\ge &c_2\inf_{1\le i\le d:|x_i|\ge|x|/\sqrt{d}}\inf_{|u_i|\le 1}\bigg[\prod_{j\neq i}(1+|x_j|)^{-1-a_j^*(x|u_i)}\bigg]\\
\ge &c_2\inf_{1\le i\le d:|x_i|\ge|x|/\sqrt{d}}\bigg[\prod_{j\neq i}(1+|x_j|)^{-1-\sup_{|u_i|\le 1}a_j^*(x|u_i)}\bigg]\\
=&c_2 \inf_{1\le i\le d:|x_i|\ge|x|/\sqrt{d}}\bigg[(1+|x_i|)^{1+\sup_{|u_i|\le 1}a_i^*(x|u_i)}\prod_{j=1}^d (1+|x_j|)^{-1-\sup_{|u_i|\le 1}a_j^*(x|u_i)}\bigg] \\
\ge&c_3  (1+|x|)^{1+A(x)}\prod_{j=1}^d (1+|x_j|)^{-1-N_j(x)}.\end{align*}
On the other hand, for all $x\in \R^d$, \begin{equation*}
\begin{split}\Gamma_{\sup}^V(x)\le&c_4\sup_{1\le i\le d}\sup_{|u_i|\ge |x_i|}\bigg[(1+|u_i|)^{-1-a_i^*(x|u_i)} \prod_{j\neq i}(1+|x_j|)^{-1-a_j^*(x|u_i)}\bigg]\\
\le& c_4\sup_{1\le i\le d}\prod_{1\le j\le d}(1+|x_j|)^{-1-\inf_{|u_i|\ge |x_i|} a_j^*(x|u_i)}\\
\le &c_4 \prod_{1\le j\le d} (1+|x_j|)^{-1-M_j(x)}.\end{split}
\end{equation*} Under \eqref{coro-22} and \eqref{coro-1}, \eqref{thm1-0} holds true for any $\gamma>0$.  Furthermore, \eqref{coro-22} implies that for $|x|$ large enough and all $1\le j\le d$, $a_j(x)\ge N_j(x),$ so for $|x|$ large enough,
$$\Lambda(x)\ge c_5(1+|x|)^{A(x)-\alpha}.$$
Having these estimates at hand, we can obtain the required assertions by following the arguments of Corollary \ref{exm1} and using Theorem \ref{thm1}.   \end{proof}

\subsection{Complement: Entropy Inequalities and Tensorisation Property for Singular Stable-like Dirichlet Forms.}
In this part, we are concerned with the case that the reference measure $\mu_V$ is a
product measure on $\R^d$, and aim to consider entropy inequalities for the Dirichlet
form $(D,\mathscr{D}(D))$. Note that, the relative entropy
$\ent_\mu$ is defined on $L^1(\R^d;\mu)$ as follows
$$\ent_\mu(f):=\mu(f \log f)-\mu(f)\log \mu(f),\quad f>0.$$ The following theorem is a direct application of \cite[Theorem 1.5]{WJ13} and the sub-additivity property of the relative entropy.

\begin{theorem}\label{thm10}  Let $\mu_V=\mu_1\otimes\ldots\otimes\mu_d$ be a
product measure on $\R^d$ such that for any $1\le i\le d$, $\mu_i(dx_i)=e^{-V_i(x_i)}\,dx_i$
is a probability measure on $\R$, where $V_i$ is a Borel measurable
function on $\R$ and $e^{-V_i(\cdot)}$ may be unbounded. If for any $1\le i\le d$ there exists a constant $C_i>0$
such that for any $x,y\in\R$
\begin{equation}\label{thm10-1}\frac{e^{V_i(x)}+e^{V_i(y)}}{|x-y|^{1+\alpha}}\,\ge C_i,\end{equation} then
the following entropy inequality holds
\begin{equation}\label{thm10-2} \ent_{\mu_V}(f)\le 2\big(\sup_{1\le i\le
d}C_i^{-1}\big) D(f,\log f),\quad f\in \mathscr{D}(D),
f>0.\end{equation} In particular, the Poincar\'{e} inequality
\eqref{thm1-1} also holds.
\end{theorem}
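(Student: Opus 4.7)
The proof reduces the multivariate entropy inequality to the one-dimensional case proved in \cite[Theorem 1.5]{WJ13}, via the sub-additivity of relative entropy for product measures combined with the fact that the Dirichlet form $D$ itself splits along coordinate axes.

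First I would record two parallel decompositions. Writing $x^{(i)}=(x_1,\ldots,x_{i-1},x_{i+1},\ldots,x_d)$, $\mu^{(i)}=\prod_{j\neq i}\mu_j$, and $f^{x^{(i)}}(y)=f(x_1,\ldots,x_{i-1},y,x_{i+1},\ldots,x_d)$, the jump kernel of $D$ is supported on lines parallel to the coordinate axes and $\mu_V$ factorises, so Fubini yields
\[
D(f,\log f)=\sum_{i=1}^d\int_{\R^{d-1}} \mathcal{E}_i\bigl(f^{x^{(i)}},\log f^{x^{(i)}}\bigr)\,\mu^{(i)}(dx^{(i)}),
\]
where $\mathcal{E}_i$ is the one-dimensional symmetric $\alpha$-stable Dirichlet form on $L^2(\R;\mu_i)$. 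On the other hand, the classical sub-additivity of entropy for product measures gives
\[
\ent_{\mu_V}(f)\le \sum_{i=1}^d\int_{\R^{d-1}} \ent_{\mu_i}\bigl(f^{x^{(i)}}\bigr)\,\mu^{(i)}(dx^{(i)}).
\]

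Next I would apply \cite[Theorem 1.5]{WJ13} coordinate by coordinate: assumption \eqref{thm10-1} is precisely the pointwise lower bound on the jump kernel of $\mathcal{E}_i$ required there, and the conclusion is the one-dimensional entropy inequality $\ent_{\mu_i}(g)\le 2C_i^{-1}\mathcal{E}_i(g,\log g)$ for $g>0$. Plugging this into the sub-additivity bound, factoring out $2\sup_{1\le i\le d}C_i^{-1}$, and re-summing using the Fubini decomposition of $D(f,\log f)$ yields \eqref{thm10-2}. The Poincar\'e inequality \eqref{thm1-1} is then extracted by the standard linearisation: insert $f=1+\varepsilon g$ with $g$ bounded and $\mu_V(g)=0$ into \eqref{thm10-2}, expand both sides to order $\varepsilon^2$, and divide by $\varepsilon^2$; the leading terms give $\mu_V(g^2)\le 4(\sup_i C_i^{-1})D(g,g)$.

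The main obstacle is internal to \cite[Theorem 1.5]{WJ13}: producing a one-dimensional entropy inequality for this non-local singular form with the correct constant in terms of $C_i$ is the substantive analytic step. The tensorisation itself is clean because both $\mu_V$ and $D$ decouple along coordinate directions. The remaining technical care is to justify, for $f\in C_c^\infty(\R^d)$ (a core of $\mathscr{D}(D)$), that the traces $f^{x^{(i)}}$ lie in $\mathscr{D}(\mathcal{E}_i)$ for $\mu^{(i)}$-almost every $x^{(i)}$ so that the one-dimensional inequality applies pointwise; this is routine, after which the bound extends to all of $\mathscr{D}(D)$ by approximation.
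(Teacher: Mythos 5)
Your proposal is correct and follows essentially the same route as the paper: sub-additivity of relative entropy for product measures, the Fubini-style decomposition of $D$ along coordinate directions, the one-dimensional entropy inequality from \cite[Theorem 1.5]{WJ13} applied coordinate by coordinate, and linearisation ($f=1+\varepsilon g$) to extract Poincar\'e. The only cosmetic difference is that the paper unpacks the one-dimensional step (via Jensen's inequality and the kernel lower bound \eqref{thm10-1}) ``for the sake of completeness,'' whereas you cite it as a black box; both are legitimate, and your constants match.
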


\begin{proof} For the sake of completeness, we provide the details here. For any $f\in \mathscr{D}(D)$ with $f>0$ and for any
$x\in\R^d$, define $$f_{x,i}(y_i):=f(x_1,\ldots, x_{i-1}, y_i, x_{i+1},
\ldots,x_d),\quad y_i\in\R, 1\le i\le d.$$ By the sub-additivity property of the relative entropy (see
\cite[Proposition 4.1]{Le} or \cite[Corollary 3]{La}), for any $f\in
\mathscr{D}(D)$ with $f>0$,
$$\ent_{\mu_V}(f)\le \sum_{i=1}^d\int \ent_{\mu_i}f_{x,i}(y_i)\,\prod_{j\neq i}\mu_j(dx_j).$$ According to
\eqref{thm10-1} and \cite[Theorem 1.5]{WJ13}, for each $i$ the inner
relative entropy is at most
$$C_i^{-1}\int_{\R\times \R}\frac{\big(f(x+ze_i)-f(x)\big)\big(\log
f(x+ze_i)-\log f(x)\big)}{|z|^{1+\alpha}}\,dz\,\mu_i(dx_i).$$

Indeed, write $f_{x,i}$ as $f_{i}$ for simplicity. By the Jensen
inequality,
\begin{equation}\label{ent}
\begin{split}  \textrm{Ent}_{\mu_i}(f_i)&=\mu_i(f_i\log f_i)-\mu_i(f_i)\log \mu_i(f_i)\\
&\le \mu_i(f_i\log f_i)-\mu_i(f_i)\mu_i(\log f_i)\\
 &= \frac{1}{2}\int \bigg[f_i(x_i)\log f_i(x_i)+f_i(y_i)\log f_i(y_i)\\
&\qquad\qquad-f_i(x_i)\log f_i(y_i)- f_i(y_i)\log f_i(x_i)\bigg]\,\mu_i(dy_i)\,\mu_i(dx_i)\\
&= \frac{1}{2}\int (f_i(x_i)-f_i(y_i))(\log f_i(x_i)-\log
f_i(y_i))\,\mu_i(dy_i)\,\mu_i(dx_i).\end{split}\end{equation} On the
other hand, by \eqref{thm10-1}, we find that
$$\aligned
&\frac{1}{2} \int \big(f_i(x_i)-f_i(y_i)\big)\big(\log f_i(x_i)-\log f_i(y_i)\big)\,\mu_i(dy_i)\,\mu_i(dx_i)\\
&\le C_i^{-1}\int_{x_i\neq y_i}\frac{
\big(f_i(x_i)-f_i(y_i)\big)\big(\log f_i(x_i)-\log
f_i(y_i)\big)}{|x_i-y_i|^{1+\alpha}}
\,\frac{e^{-V_i(x_i)}+e^{-V_i(y_i)}}{2}\,dy_i\,dx_i\\
&= C_i^{-1} \int_{\R\times
\R}\frac{\big(f(x+ze_i)-f(x)\big)\big(\log
f(x+ze_i)-\log f(x)\big)}{|z|^{1+\alpha}}\,dz\,\mu_i(dx_i),
\endaligned$$ which, along with \eqref{ent}, yields the above desired assertion.

Summing up the conclusions above, we prove the required
assertion for the entropy inequality \eqref{thm10-2}.
The last conclusion follows from the well known fact that the entropy
inequality \eqref{thm10-2} is stronger than the Poincar\'{e}
inequality \eqref{thm1-1}. (To see this, one can apply \eqref{thm10-2} to the
function $1 + \varepsilon f$ and then take the limit as $\varepsilon\rightarrow0.$)
\end{proof}

The proof of Theorem \ref{thm10} is based on the sub-additivity property of entropy and the characterization of Dirichlet form
$(D,\mathscr{D}(D))$. Sub-additivity formulas are well known for variance and entropy, and then have been extended to $\Phi$-entropies in \cite[Proposition 3.1]{Cha}, which is called the tensorisation property. The tensorisation property for $\Phi$-entropies also can be used to establish $\Phi$-Sobolev inequalities for Dirichlet form
$(D,\mathscr{D}(D))$.  In particular, by tensorisation property the $\Phi$-Sobolev
inequalities are then infinite dimensional since they hold on the
product space with the maximum of the one dimensional constants, e.g.\ see \eqref{thm10-2}. However, such statements do not hold for weak/super Poincar\'{e} inequalities and transportation-cost inequalities. See \cite[Theorem 5]{BCR}, \cite[Section 1.3]{G} and \cite[Chapter 22]{Villani} for more details.

 % However, one can see that
%the stable-like Dirichlet form $(D_S, \mathscr{D}(D_S))$ does not
%have the tensorisation property, which further shows the difference
%between $(D,\mathscr{D}(D))$ and $(D_S, \mathscr{D}(D_S))$; on the
%other hand, just as in the local setting, the super Poincar\'{e}
%inequality for $(D,\mathscr{D}(D))$ does not have the
%tensorisation property, which shows that Theorem \ref{thm1}(2)
%is nontrivial even if the reference measure $\mu_V$ is a product
%measure.

To show that Theorem \ref{thm10} is sharp, we consider the following
corollary, which is regarded as a continuation of Corollary \ref{exm1}.
\begin{corollary}\label{exm22}Let $$e^{-V(x)}=C_{\varepsilon_1, \ldots, \varepsilon_d}\prod_{i=1}^d(1+|x_i|)^{-(1+\varepsilon_i)}$$ with $\varepsilon_i>0$ for all $1\le i\le d$.
Then the entropy inequality \eqref{thm10-2} holds for some constant $C>0$
if and only if $\varepsilon_i\ge \alpha$ for all $1\le i\le
d$.\end{corollary}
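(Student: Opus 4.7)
My plan is to handle the two directions separately. Sufficiency will reduce to verifying the hypothesis of Theorem \ref{thm10} for each one-dimensional marginal, while necessity will follow from Corollary \ref{exm1}(1) via the standard implication from entropy inequality to Poincar\'{e} inequality.

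For the sufficiency direction, suppose $\varepsilon_i \ge \alpha$ for all $1 \le i \le d$. The measure $\mu_V$ factorises as $\mu_1 \otimes \cdots \otimes \mu_d$ with $\mu_i(dx_i) = c_i(1 + |x_i|)^{-(1+\varepsilon_i)}\,dx_i$, so that $e^{V_i(x)} = c_i^{-1}(1 + |x|)^{1+\varepsilon_i}$. To apply Theorem \ref{thm10}, I would verify
\[
  \frac{(1 + |x|)^{1+\varepsilon_i} + (1 + |y|)^{1+\varepsilon_i}}{|x - y|^{1+\alpha}} \ge C_i' > 0
  \quad \textrm{for all } x \ne y,
\]
by splitting into the short-range regime $\{|x-y| \le 1\}$, where the numerator is at least $2c_i^{-1}$ and the denominator at most $1$ so the bound is immediate, and the long-range regime $\{|x-y| > 1\}$, where $\max(|x|,|y|) \ge |x-y|/2$ by the triangle inequality, so the numerator dominates a constant multiple of $|x-y|^{1+\varepsilon_i}$ and the ratio is bounded below by a constant times $|x-y|^{\varepsilon_i - \alpha}$, which stays away from zero exactly because $\varepsilon_i \ge \alpha$. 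Theorem \ref{thm10} then delivers the entropy inequality \eqref{thm10-2} with constant $2\sup_i C_i^{-1}$.

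For necessity, I would rely on the observation (already noted at the end of the proof of Theorem \ref{thm10}) that the entropy inequality is stronger than the Poincar\'{e} inequality: substituting $1 + \varepsilon f$ for bounded $f \in \mathscr{D}(D)$ with $\mu_V(f) = 0$ and expanding both sides to order $\varepsilon^2$ yields \eqref{thm1-1} with some constant of the same order. Corollary \ref{exm1}(1) then forces $\varepsilon_i \ge \alpha$ for every $1 \le i \le d$, completing the proof.

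I do not expect any serious obstacle. The one step that requires some care is the triangle-inequality dichotomy used to verify \eqref{thm10-1} in the long-range regime, and this is a short manipulation. The necessity half is essentially a citation, aside from noting that the test-function class $C_b^\infty(\R^d)$, which the proof of Corollary \ref{exm1}(1) has already confirmed to sit inside $\mathscr{D}(D)$ for this class of measures, is rich enough to carry the $\varepsilon$-perturbation argument.
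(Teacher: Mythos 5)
Your proposal is correct and follows essentially the same route as the paper: verify \eqref{thm10-1} so that Theorem \ref{thm10} gives sufficiency, and use entropy-implies-Poincar\'{e} together with Corollary \ref{exm1}(1) for necessity. The only difference is cosmetic — you carry out the short-range/long-range verification of \eqref{thm10-1} explicitly, whereas the paper cites it from \cite[Example 1.6]{WJ13}.
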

\begin{proof} If $\varepsilon_i\ge \alpha$ for all $1\le i\le d$,
then \eqref{thm10-1} holds (see \cite[Example 1.6]{WJ13}), and so
the desired entropy inequality \eqref{thm10-2} follows from Theorem
\ref{thm10}. On the other hand, since the entropy
inequality \eqref{thm10-2} is stronger than the Poincar\'{e}
inequality \eqref{thm1-1}, by Corollary \ref{exm1}(1),
we know that \eqref{thm10-2} does not hold when $\varepsilon_i\in(0,
\alpha)$ for some $1\le i\le d.$
\end{proof}

\subsection{Complement: Weak Poincar\'{e} Inequalities for Singular
Stable-like Dirichlet Forms}

In the end of this section, we turn to the weak Poincar\'{e} inequality, which can be used
to characterize various convergence rates of the associated semigroups slower than exponential. In the following, let $\mu_{V_0}(dx)=e^{-V_0(x)}\,dx$ be a
probability measure on $\R^d$ such that $e^{-V_0(x)}$ is a bounded measurable function, and \eqref{thm1-0} and
$\lim_{r\to0}\Phi(r)>0$ hold with ${V_0}$ in place of ${V}$. Then,
for a probability measure $\mu_{V}(dx)=e^{-V(x)}\,dx$, we have
\begin{theorem}\label{thm2} Suppose that \begin{equation}\label{thm2-1} \sup_{x\in\R^d}{e^{V(x)-V_0(x)}}<\infty.\end{equation} If there exist a family of Borel sets $\{A_s\}_{s\ge0}$ such that $A_s\uparrow \R^d$ as $s\to\infty$, $\Phi_0(s):=\sup_{x\in A_s} { e^{V_0(x)-V(x)}}<\infty$ for any $s>0$ and $\lim_{s\to\infty}\Phi_0(s)=\infty$,  then the following
weak Poincar\'{e} inequality
\begin{equation}\label{thm2-2}\mu_V(f^2)\le \eta(r)
D(f,f)+r\|f\|_\infty^2, \quad r>0, f\in \mathscr{D}(D), \mu_V(f)=0
\end{equation} holds for
$$\eta(r)=C\inf\Big\{{\Phi_0(s)}: s>0\textrm{ such that }\mu_V(A_s)\ge \frac{1}{1+r}\Big\}$$ with some constant $C>0$ independent of $r$.

In particular, under \eqref{thm2-1} the weak Poincar\'{e} inequality \eqref{thm2-2} holds with $$\eta(r)=C_1\inf\Big\{{s}: s>0\textrm{ such that }\mu_V(D_s)\ge \frac{1}{1+r}\Big\},$$ where $D_s=\{x\in \R^d: e^{V_0(x)-V(x)}\le s\}$ and $C_1>0$ is independent of $r$.   \end{theorem}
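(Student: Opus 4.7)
The strategy is a density-comparison reduction to the true Poincar\'e inequality for the reference measure $\mu_{V_0}$. Writing $D^0(f,f):=\frac{1}{2}\sum_{i=1}^d\int_{\R^d\times\R}\frac{(f(x+ze_i)-f(x))^2}{|z|^{1+\alpha}}\,dz\,\mu_{V_0}(dx)$ for the Dirichlet form analogous to $D$ but with $\mu_{V_0}$ in place of $\mu_V$, the hypotheses on $V_0$ yield, via Theorem \ref{thm1}(1), a constant $C_0>0$ so that
$$\mu_{V_0}(f^2)-\mu_{V_0}(f)^2\le C_0 D^0(f,f),\quad f\in\mathscr{D}(D^0).$$
Letting $K:=\sup_x e^{V(x)-V_0(x)}<\infty$ under \eqref{thm2-1}, the pointwise bound $e^{-V_0}\le K e^{-V}$ gives $D^0(f,f)\le K D(f,f)$ for every $f$, and in particular $\mathscr{D}(D)\cap L^\infty\subset\mathscr{D}(D^0)$.

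The heart of the argument is an elementary splitting. Fix $f\in\mathscr{D}(D)\cap L^\infty$ with $\mu_V(f)=0$. For every constant $c$, $\mu_V((f-c)^2)=\mu_V(f^2)+c^2\ge\mu_V(f^2)$, so I take $c:=\mu_{V_0}(f)$, which also minimizes $\mu_{V_0}((f-\cdot)^2)=\var_{\mu_{V_0}}(f)$, and then decompose along $A_s$. On $A_s$, the definition $\Phi_0(s)=\sup_{x\in A_s} e^{V_0(x)-V(x)}$ yields $e^{-V}\le \Phi_0(s)e^{-V_0}$ pointwise, hence
$$\mu_V\bigl((f-c)^2\I_{A_s}\bigr)\le \Phi_0(s)\var_{\mu_{V_0}}(f)\le C_0K\Phi_0(s)\,D(f,f).$$
On $A_s^c$ the trivial bound $|c|\le \|f\|_\infty$ gives $|f-c|\le 2\|f\|_\infty$, so $\mu_V((f-c)^2\I_{A_s^c})\le 4\|f\|_\infty^2\,\mu_V(A_s^c)$. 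Combining,
$$\mu_V(f^2)\le C_0 K\Phi_0(s)\,D(f,f)+4\|f\|_\infty^2\,\mu_V(A_s^c),\qquad s>0.$$

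For given $r>0$, restricting to $s$ with $\mu_V(A_s)\ge 1/(1+r)$ yields $\mu_V(A_s^c)\le r/(1+r)\le r$; absorbing the factor $4$ into the constant defining $\eta$ and taking the infimum over admissible $s$ produces the claimed rate $\eta(r)=C\inf\{\Phi_0(s):\mu_V(A_s)\ge 1/(1+r)\}$. Extension from $\mathscr{D}(D)\cap L^\infty$ to all $f\in\mathscr{D}(D)$ is automatic since the right-hand side is trivial when $\|f\|_\infty=\infty$. The ``in particular'' statement follows on specialising $A_s:=D_s=\{x:e^{V_0(x)-V(x)}\le s\}$: by construction $\Phi_0(s)\le s$, and since $e^{V_0-V}$ is finite almost everywhere the sets $D_s$ exhaust $\R^d$ as $s\to\infty$.

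The computation is routine once the right devices are chosen; the only point deserving emphasis is the dual role played by $c=\mu_{V_0}(f)$. It must simultaneously realise $\var_{\mu_{V_0}}(f)$, so that Poincar\'e for $\mu_{V_0}$ is directly usable on the ``good'' part $A_s$, and satisfy $|c|\le \|f\|_\infty$, so that the crude bound on the ``bad'' part $A_s^c$ stays quadratic in $\|f\|_\infty$. No new Lyapunov function or local functional inequality is required, since the hard analytic work is already carried out in the proof of Theorem \ref{thm1}(1) applied to $V_0$; what remains is bookkeeping of densities and of the threshold family $\{A_s\}$.
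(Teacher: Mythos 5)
Your proof is essentially correct and follows the same comparison-of-densities strategy as the paper: apply Theorem \ref{thm1}(1) to $V_0$ to get a true Poincar\'e inequality, pass the comparison $e^{-V}\le\Phi_0(s)e^{-V_0}$ on $A_s$ and $e^{-V_0}\le Ke^{-V}$ globally through the local variance, and conclude. The one genuine divergence is at the end: the paper derives the local inequality
$\int_{A_s}\bigl(f-\mu_{V,A_s}(f)\bigr)^2\,d\mu_V\le c\,\Phi_0(s)D(f,f)$
and then invokes \cite[Theorem 4.3.1]{WBook} as a black box to convert this into the weak Poincar\'e inequality with the stated $\eta$, whereas you carry out the conversion by hand with the elementary decomposition $\mu_V((f-c)^2)=\mu_V((f-c)^2\I_{A_s})+\mu_V((f-c)^2\I_{A_s^c})$ and the crude bound $|f-c|\le2\|f\|_\infty$. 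Your route is more self-contained, which is a virtue; the price is the extra multiplicative factor $4$ in front of $\|f\|_\infty^2\mu_V(A_s^c)$.

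That factor is the one place where the write-up is imprecise. You cannot ``absorb the factor $4$ into the constant defining $\eta$'': the $4$ multiplies the $r\|f\|_\infty^2$ term, not the $D(f,f)$ term, so it is not absorbed by rescaling $C$. The correct fix is a reparametrisation $r\mapsto r/4$, which changes the admissibility constraint to $\mu_V(A_s)\ge 1/(1+r/4)$ (a strictly stronger requirement for $r<3$), so what you obtain is $\eta(r)=C\inf\{\Phi_0(s):\mu_V(A_s)\ge1/(1+r/4)\}$ rather than the form stated with $1/(1+r)$. Since $\Phi_0$ is only assumed monotone, the two infima are not in general comparable up to a universal constant (think of $\Phi_0$ jumping). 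So your argument proves a weak Poincar\'e inequality with a rate of the same order but not literally the rate claimed in the theorem; the paper's appeal to \cite[Theorem 4.3.1]{WBook} is precisely what pins down the stated form. In all the applications (Corollary \ref{exm3}) one only uses asymptotics of $\eta$ as $r\to0$, for which your version suffices, so the discrepancy is cosmetic there, but in the proof of the theorem as stated it is a real (if small) gap. A cleaner fix is to note that $\mu_V((f-c)^2)=\mu_V(f^2)+c^2$ when $\mu_V(f)=0$, expand $\int_{A_s^c}(f-c)^2\,d\mu_V=\int_{A_s^c}f^2\,d\mu_V-2c\int_{A_s^c}f\,d\mu_V+c^2\mu_V(A_s^c)$ and cancel the $c^2$ terms using $\mu_V(A_s^c)\le1$; this reduces the factor to $3$ but still does not give $1$, so the reparametrisation (or the cited theorem) remains necessary.
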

\begin{proof} Since $ C_c^\infty(\R^d)$ is a core of $\mathscr{D}(D)$, we only need to consider $f\in C_c^\infty(\R^d)$. According to Theorem \ref{thm1}(1), the
following Poincar\'{e} inequality $$ \mu_{V_0}(f^2)\le
c_1\sum_{i=1}^d\int_{\R^d\times \R}
\frac{(f(x+ze_i)-f(x))^2}{|z|^{1+\alpha}}\,dz\,\mu_{V_0}(dx),\,\, f\in C_c^\infty(\R^d), \mu_{V_0}(f)=0$$ holds for
some constant $c_1>0$. Let $\{A_s\}_{s\ge0}$ be a family of subsets as in theorem. Therefore, for any $s>0$ and $f\in
C_c^\infty(\R^d)$,
\begin{align*}
&\int_{A_s}\left(f(x)-\frac{1}{\mu_V(A_s)}\int_{A_s}f(x)\,\mu_V(dx)\right)^2\,\mu_V(dx)\\
&=\inf_{a\in\R}\int_{A_s}(f(x)-a)^2\,\mu_V(dx)\\
&\le \int_{A_s}(f(x)-\mu_{V_0}(f))^2\,\mu_V(dx)\\
&\le \left(\sup_{x\in A_s}\frac{e^{-V(x)}}{e^{-V_0(x)}}\right)\int_{A_s}(f(x)-\mu_{V_0}(f))^2\,\mu_{V_0}(dx)\\
&\le c_1\left(\sup_{x\in A_s}\frac{e^{-V(x)}}{e^{-V_0(x)}}\right)\sum_{i=1}^d\int_{\R^d\times \R}
\frac{(f(x+ze_i)-f(x))^2}{|z|^{1+\alpha}}\,dz\,\mu_{V_0}(dx)\\
&\le c_2\Phi_0(s)\sum_{i=1}^d\int_{\R^d\times \R}
\frac{(f(x+ze_i)-f(x))^2}{|z|^{1+\alpha}}\,dz\,\mu_V(dx),
\end{align*} where in the last inequality we have used \eqref{thm2-1}. Then the required weak Poincar\'{e} inequality
\eqref{thm2-2} follows from \cite[Theorem 4.3.1]{WBook}.

Taking $A_s=D_s$ and using the fact $\Phi_0(s)=\sup_{x\in D_s} e^{V_0(x)-V(x)}\le s$, one can get the second assertion.
\end{proof}

We consider the following corollary to illustrate the power of Theorem \ref{thm2}.
\begin{corollary}\label{exm3}{\rm [Continuation of Corollaries \ref{exm1} and \ref{exm2}]}\ \
\begin{itemize}
\item[(1)] Let $$e^{-V(x)}=C_{\varepsilon_1, \ldots, \varepsilon_d}\prod_{i=1}^d(1+|x_i|)^{-(1+\varepsilon_i)}$$
with $0<\varepsilon_*:=\min_{1\le i\le
d}\varepsilon_i<\alpha$. Then the weak
Poincar\'{e} inequality \eqref{thm2-2} holds with
\begin{equation}\label{exm3-1}\eta(r)=c_1\Big(1+r^{-{\sum_{i=1}^d(\alpha-\varepsilon_i)^+}/{\varepsilon_*}}\Big),\quad r>0\end{equation}
for some constant $c_1>0$. Consequently, %if $\varepsilon_*\ge \alpha$, then the weak
%Poincar\'{e} inequality \eqref{thm2-2} holds with $\eta(r)=c_2$ for some constant $c_2>0$, and so in this case the Poincar\'{e} inequality \eqref{thm1-1} %holds; if $\varepsilon_*< \alpha$, then
there exists a constant $\lambda>0$ such that
\begin{equation*}\begin{split}  \|P_t-\mu_V\|_{L^\infty(\R^d;\mu_V)\to L^2(\R^d;\mu_V)}:&=\sup_{f\in L^\infty(\R^d;\mu_V)}\|P_tf-\mu_V(f)\|_{L^2(\R^d;\mu_V)}\\
&\le
\lambda
t^{-{\varepsilon_*}/{\sum_{i=1}^d(\alpha-\varepsilon_i)^+}},\quad
t>0.\end{split}\end{equation*}
In particular, let $e^{-V(x)}$ be the density function  above with $\varepsilon_k\in(0,\alpha)$ for some  $1\le k\le d$ and $\varepsilon_i\in[\alpha,\infty)$ for any $i\neq k$. Then, the rate function $\alpha$ given by \eqref{exm3-1} is
$$\eta(r)=c_3\Big(1+r^{-{(\alpha-\varepsilon_k)}/{\varepsilon_k}}\Big),\quad r>0$$ for some constant $c_3>0$, which
 is sharp in the sense
that \eqref{thm2-2} does not hold  if
$$\lim_{r\to0}r^{(\alpha-\varepsilon_k)/\varepsilon_k}\eta(r)=0.$$
    \item[(2)] Let $$e^{-V(x)}=C_{\varepsilon_1, \ldots, \varepsilon_d,\alpha}\prod_{i=1}^d(1+|x_i|)^{-(1+\alpha)}\log^{-\varepsilon_i}(e+|x_i|)$$ with %$\varepsilon_i\in\R$ for all $1\le i\le d$.
        $\varepsilon_*:=\min_{1\le i\le
d}\varepsilon_i<0$. Then the weak Poincar\'{e}
inequality \eqref{thm2-2} holds with
\begin{equation}\label{exm3-2}\eta(r)=c\Big(1+\log^{-\sum_{i=1}^d(\varepsilon_i\wedge0)}(1+r^{-1})\Big),\quad r>0\end{equation} for some
constant $c>0$. Consequently, there exist constants $\lambda_1$ and
$\lambda_2>0$ such that
\begin{equation*}\begin{split}  \|P_t-\mu_V\|_{L^\infty(\R^d;\mu_V)\to L^2(\R^d;\mu_V)}&:=\sup_{f\in L^\infty(\R^d;\mu_V)}\|P_tf-\mu_V(f)\|_{L^2(\R^d;\mu_V)}\\
&\le
\exp\big(\lambda_1-\lambda_2t^{1/(1-\sum_{i=1}^d(\varepsilon_i\wedge0))}\big),\quad
t>0.\end{split}\end{equation*} In particular, consider the density function
$$e^{-V(x)}=C\left(\prod_{i=1}^d(1+|x_i|)^{-(1+\alpha)}\right)
\left(\log^{\theta_k}(e+|x_k|)\prod_{1\le i\le d, i\neq
k}\log^{-\varepsilon_i}(e+|x_i|)\right)$$ on $\R^d$ with
$\theta_k>0$, $\varepsilon_i\ge0$ for some $1\le k\le d$ and
any $i\neq k$ and $C$ is the normalizing constant. Then, the
rate function $\alpha$ defined by \eqref{exm3-2} is reduced into
$$\eta(r)=c\Big(1+\log^{\theta_k}(1+r^{-1})\Big),\quad r>0$$ with some constant $c>0$, which is sharp in the sense that
\eqref{thm2-2} does not hold if
$$\lim_{r\to0}\log^{-\theta_k}(1+r^{-1})\eta(r)=0.$$
\end{itemize} \end{corollary}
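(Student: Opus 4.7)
My strategy is to deduce both assertions from Theorem \ref{thm2} by constructing, in each case, a reference potential $V_0$ for which $\mu_{V_0}$ already satisfies the Poincar\'{e} inequality supplied by Theorem \ref{thm1}(1) (via its explicit specializations Corollary \ref{exm1}(1) or \ref{exm2}(1)), and for which $e^{V-V_0}$ is globally bounded on $\R^d$.

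For part $(1)$, set $\tilde\varepsilon_i := \varepsilon_i\vee\alpha$ and
$$e^{-V_0(x)} := C_0\prod_{i=1}^d(1+|x_i|)^{-(1+\tilde\varepsilon_i)}.$$
Since $\tilde\varepsilon_i\ge\alpha$ for every $i$, Corollary \ref{exm1}(1) applied to $V_0$ gives the Poincar\'{e} inequality for $\mu_{V_0}$ (so the $V_0$-hypothesis of Theorem \ref{thm2} is satisfied) and condition \eqref{thm1-0} is automatic. The ratio
$$e^{V(x)-V_0(x)}=\frac{C_{\tilde\varepsilon}}{C_{\varepsilon}}\prod_{i=1}^d(1+|x_i|)^{-(\alpha-\varepsilon_i)^+}$$
is bounded, so \eqref{thm2-1} holds. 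I would then take $A_s=B(0,s)$ and estimate directly
$$\Phi_0(s)=\sup_{|x|\le s}e^{V_0(x)-V(x)}\lesssim (1+s)^{\sum_{i=1}^d(\alpha-\varepsilon_i)^+},\qquad \mu_V(B(0,s)^c)\le\sum_{i=1}^d\mu_V(|x_i|>s/\sqrt{d})\lesssim s^{-\varepsilon_*}.$$
Requiring $\mu_V(B(0,s))\ge 1/(1+r)$ forces $s\approx r^{-1/\varepsilon_*}$, and plugging into $\Phi_0$ produces \eqref{exm3-1}. The companion $L^\infty\to L^2$ algebraic decay is the standard equivalence between weak Poincar\'{e} inequalities and ergodic rates, as in \cite[Theorem 4.1.5]{WBook}.

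Part $(2)$ follows the same scheme with logarithmic data: take $\tilde\varepsilon_i:=\varepsilon_i\vee 0$, so that Corollary \ref{exm2}(1) supplies the Poincar\'{e} inequality for $\mu_{V_0}$, and the ratio $e^{V-V_0}\lesssim\prod_i\log^{\varepsilon_i\wedge 0}(e+|x_i|)\le 1$ is bounded. Using the tail estimate $\mu_V(|x_i|>s)\lesssim s^{-\alpha}\log^{-\varepsilon_i}(s)$, the constraint $\mu_V(B(0,s))\ge 1/(1+r)$ inverts to $s\approx r^{-1/\alpha}$ up to a logarithmic correction, and because $e^{V_0-V}(x)\lesssim\prod_i\log^{-(\varepsilon_i\wedge 0)}(e+|x_i|)$ one finds $\Phi_0(s)\lesssim\log^{-\sum_i(\varepsilon_i\wedge 0)}(e+s)$. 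Substituting gives the rate $\eta(r)\lesssim\log^{-\sum_i(\varepsilon_i\wedge 0)}(1+r^{-1})$ of \eqref{exm3-2}, and the sub-exponential decay of $\|P_t-\mu_V\|$ again follows from the equivalence theorems in \cite{WBook}.

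For the sharpness statements in both parts, I would reuse the one-dimensional reduction employed in the proofs of Corollaries \ref{exm1} and \ref{exm2}: insert a test function $f(x)=g(x_k)$ depending only on the critical coordinate into the weak Poincar\'{e} inequality \eqref{thm2-2}. Because $\mu_V$ is a product measure, \eqref{thm2-2} reduces to a one-dimensional weak Poincar\'{e} inequality for the symmetric $\alpha$-stable Dirichlet form against the $k$-th marginal of $\mu_V$, and the sharp one-dimensional rate functions from \cite[Corollaries 1.2 and 1.3]{WW} immediately yield the claimed impossibility of improving $\eta$. I expect the most delicate step to be the precise inversion between $s$ and $r$ in part $(2)$, where the logarithmic exponent must be tracked carefully through both $\mu_V(B(0,s)^c)$ and $\Phi_0(s)$ to produce exactly $-\sum_{i=1}^d(\varepsilon_i\wedge 0)$ and not merely some weaker power of $\log(1+r^{-1})$.
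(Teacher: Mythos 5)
Your proposal follows essentially the same route as the paper: choose $V_0$ via the elementwise maximum/truncation $\tilde\varepsilon_i=\varepsilon_i\vee\alpha$ (resp.\ $\varepsilon_i\vee 0$), verify $\sup e^{V-V_0}<\infty$, invoke Theorem \ref{thm2}, and handle sharpness by plugging in $f(x)=g(x_k)$ and falling back on the one-dimensional results of \cite{WW}. The only deviation is cosmetic: you take $A_s=B(0,s)$, whereas the paper restricts $A_s$ to be a coordinate box constraining only the \emph{critical} coordinates (those with $\varepsilon_i<\alpha$ in part (1), $\varepsilon_i<0$ in part (2)). Both choices give the same asymptotics because the heaviest marginal tail ($s^{-\varepsilon_*}$, or $s^{-\alpha}\log^{-\varepsilon_*}(s)$) dominates the complement measure $\mu_V(A_s^c)$ in either case, and the bound on $\Phi_0(s)$ is unchanged; so the final inversion $s\sim r^{-1/\varepsilon_*}$ (resp.\ the logarithmic inversion) produces exactly \eqref{exm3-1} and \eqref{exm3-2}. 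One small point worth tightening: where you write ``condition \eqref{thm1-0} is automatic'', it is not truly automatic from Corollary \ref{exm1}(1) alone — Theorem \ref{thm2} requires \eqref{thm1-0} and the positivity of $\Phi$ \emph{for} $V_0$, which happens to hold because $V_0$ is exactly the potential of Corollary \ref{exm1}(1), but this should be stated rather than asserted as automatic. Likewise in part (2) you should track that $s=c\,(r^{-1}\log^{-\varepsilon_*}(1+r^{-1}))^{1/\alpha}$ (rather than a bare $r^{-1/\alpha}$) is what makes the inversion exact; the paper records this explicitly and it is the delicate step you correctly flagged.
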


\begin{proof} (1) Let $$e^{-V_0(x)}
=C_{\varepsilon_1, \ldots, \varepsilon_d,
\alpha}\prod_{i=1}^d(1+|x_i|)^{-(1+(\varepsilon_i\vee\alpha))}.$$
Then, \eqref{thm2-1} holds. On the other hand, for any $s>0$, let
$$A_s=\{x\in\R^d: |x_i|\le s\textrm{ for }1\le i\le d \textrm{ such that }
\varepsilon_i<\alpha\}.$$ Then, there is a constant $c_1>0$ such that for all $s>0$,
$$ \Phi_0(s)= c_1 (1+ s)^{\sum_{i=1}^d(\alpha-\varepsilon_i)^+}.$$ Hence,
for $r>0$ small enough,
\begin{equation*}\begin{split}\eta(r)\le c_2\inf\Big\{(1+ s)^{\sum_{i=1}^d(\alpha-\varepsilon_i)^+}: s>0\textrm{ and  } \sum_{i:0<\varepsilon_i<\alpha}\frac{1}{(1+s)^{\varepsilon_i}} \le c_3r\Big\}.\end{split}\end{equation*}
Setting $s= c_4r^{-1/\varepsilon*}$ for $r>0$ small enough and some constant $c_4>0$ in the right hand side, we can get the
first assertion by Theorem \ref{thm2}. Furthermore, the corresponding
bound of $\|P_t-\mu_V\|_{L^\infty(\R^d;\mu_V)\to L^2(\R^d;\mu_V)}$
follows from \cite[Theorem 4.1.5(2)]{WBook}. Here we mention that, since $\|P_tf-\mu_Vf\|_{L^2(\R^d;\mu_V)}\le 2\|f\|_{L^\infty(\R^d;\mu_V)}$ for all $t>0$, the bound in $t$ is useful only for large $t$. Suppose that the weak
Poincar\'{e} inequality \eqref{thm2-2} holds for a probability
measure
$$e^{-V(x)}=C_{\varepsilon_1, \ldots, \varepsilon_d}
(1+|x_k|)^{-(1+\varepsilon_k)}\prod_{1\le i\le d, i\neq
k}(1+|x_i|)^{-(1+\varepsilon_i)}$$ on $\R^d$ with
$\varepsilon_k\in(0,\alpha)$ and $\varepsilon_i\in[\alpha,\infty)$
for some $1\le k\le d$ and any $i\neq k$. We consider the function
$f(x)=g(x_k)$, where $g\in C_c^\infty(\R)$. Also by \cite[Theorem 2.1(1)]{CW13}, we can apply $f\in C_b^\infty(\R^d)\subset \mathscr{D}(D)$ into the weak Poincar\'{e} inequality
\eqref{thm2-2}, and obtain that for this class of functions the weak Poincar\'{e} inequality
\eqref{thm2-2} is reduced into
\begin{equation}\label{proof-exm3-1}\begin{split}m(g^2)\le& \eta(r)\int_{\R\times \R}
\frac{(g(x+z)-g(x))^2}{|z|^{1+\alpha}}\,dz\,m(dx)\\
&\qquad\qquad \qquad\qquad \qquad+r\|g\|^2_\infty,\quad m(g)=0, g\in
C_c^\infty(\R),
\end{split}\end{equation} where
$m(dx)=C_{\varepsilon_k}(1+|x|)^{-(1+\varepsilon_k)}\,dx$ and
$\varepsilon_k\in (0,\alpha)$. Then, the last assertion is a
consequence of \cite[Corollary 1.2(3)]{WW}.

(2) Let $$e^{-V_0(x)} =C_{\varepsilon_1, \ldots, \varepsilon_d,
\alpha}\prod_{i=1}^d(1+|x_i|)^{-(1+\alpha)}\log^{-(\varepsilon_i\vee0)}(e+|x_i|).$$
Then, \eqref{thm2-1} holds. On the other hand, for any $s>0$, let
$$A_s=\{x\in\R^d: |x_i|\le s\textrm{ for }1\le i\le d \textrm{ such that }
\varepsilon_i<0\}.$$  We can find some constant $c_1>0$ such that for all $s>0$,
$$ \Phi_0(s)=c_1\Big(\log (e+s)\Big)^{-\sum_{i=1}^d(\varepsilon_i\wedge0)},$$ where $-\sum_{i=1}^d(\varepsilon_i\wedge0)>0$. Then, for $r>0$ small enough,
\begin{equation*}\begin{split}\eta(r)\le c_2\inf\Big\{\!\Big(\log (e+s)\Big)^{-\sum_{i=1}^d(\varepsilon_i\wedge0)}: s>0\textrm{ and } \sum_{i:\varepsilon_i<0}\frac{1}{(1+s)^\alpha \log^{\varepsilon_i}(e+s)} \le c_3r\Big\}.\end{split}\end{equation*}
Therefore, we can prove the
first assertion, by using Theorem \ref{thm2} and taking $s=c_4\Big(\frac{1}{r}\log^{-\varepsilon_*}(1+\frac{1}{r})\Big)^{1/\alpha}$ for $r>0$ small enough and some proper constant $c_4>0$ in the right hand side of the inequality above.

Furthermore, the
corresponding bound of $\|P_t-\mu_V\|_{L^\infty(\R^d;\mu_V)\to
L^2(\R^d;\mu_V)}$ follows from \cite[Theorem 4.1.5(1)]{WBook}.
Suppose that the weak Poincar\'{e} inequality \eqref{thm2-2} holds
for a probability measure with the density function as follows
$$e^{-V(x)}=C\left(\prod_{i=1}^d(1+|x_i|)^{-(1+\alpha)}\right)
\left(\log^{\theta_k}(e+|x_k|)\prod_{1\le i\le d, i\neq
k}\log^{-\varepsilon_i}(e+|x_i|)\right),$$ where
$\theta_k>0$ and $\varepsilon_i\ge0$ for some $1\le k\le d$ and
any $i\neq k$. We consider the function $f(x)=g(x_k)$, where $g\in
C_c^\infty(\R)$. Then, the weak Poincar\'{e} inequality
\eqref{thm2-2} is reduced into the inequality \eqref{proof-exm3-1},
where
$$m(dx)=C_{\theta_k,\alpha}(1+|x|)^{-(1+\alpha)}\log^{\theta_k}(e+|x|)\,dx$$
and $\theta_k>0$. Hence, the last assertion follows from
\cite[Corollary 1.3(4)]{WW}.
\end{proof}

\noindent{\bf Acknowledgements.} The author would like to thank two referees for their helpful comments and careful corrections on previous versions. Financial support through National
Natural Science Foundation of China (No.\ 11201073), the JSPS postdoctoral fellowship (26$\cdot$04021), and the Program for Nonlinear Analysis and Its Applications (No. IRTL1206) are gratefully acknowledged.

\end{document}